\documentclass[11pt]{article}
\usepackage{graphicx}
\usepackage[a4paper,margin=0.6in]{geometry}
\usepackage{float}
\usepackage{empheq}
\usepackage{pdflscape}
\usepackage{multirow}
\usepackage{circledsteps}
\usepackage{enumitem}
\usepackage{comment}
\usepackage[colorlinks=true,citecolor=magenta,linkcolor=blue]{hyperref}
\pdfpagewidth=\paperwidth \pdfpageheight=\paperheight
\usepackage{amsfonts,amssymb,amsthm,amsmath,eucal,tabu,url}
\usepackage{pgf}
\usepackage{caption}
 \usepackage{array}
 \usepackage{booktabs}
 \usepackage{arydshln}
 \usepackage[table]{xcolor}
 \usepackage{makecell}
 \usepackage{tikz-cd}
 \usepackage{pstricks}
 \usepackage{pstricks-add}
 \usepackage{pgf,tikz, pgfplots}
\pgfplotsset{compat=1.18}
 \usetikzlibrary{automata}
 \usetikzlibrary{arrows}
\usetikzlibrary{intersections,patterns,pgfplots.groupplots}
 \usepackage{indentfirst}
\usepackage{tabularx} 

\theoremstyle{plain}
\newtheorem{thm}{Theorem}[section]

\newtheorem{lemma}[thm]{Lemma}
\newtheorem{proposition}[thm]{Proposition}
\newtheorem{corollary}[thm]{Corollary}

\theoremstyle{definition}
\newtheorem{definition}[thm]{Definition}
\newtheorem{remark}[thm]{Remark}
\newtheorem{example}[thm]{Example}

\newtheorem{problem}[thm]{Problem}

\newtheorem{thevarthm}[thm]{\varthmname}

\newenvironment{varthm*}[1]{\trivlist\item[]{\bf #1.}\it}{\endtrivlist}

\def\keywordname{{\bfseries Keywords}}%
\def\keywords#1{\par\addvspace\medskipamount{\rightskip=0pt plus1cm
\def\and{\ifhmode\unskip\nobreak\fi\ $\cdot$
}\noindent\keywordname\enspace\ignorespaces#1\par}}
\def\subclassname{{\bfseries Mathematics Subject Classification
(2020)}\enspace}
\def\subclass#1{\par\addvspace\medskipamount{\rightskip=0pt plus1cm
\def\and{\ifhmode\unskip\nobreak\fi\ $\cdot$
}\noindent\subclassname\ignorespaces#1\par}}

\renewcommand{\r}{\rightarrow}

\let\oldeqref\eqref
\renewcommand{\eqref}[1]{\begingroup\hypersetup{linkcolor=black}\oldeqref{#1}\endgroup}

\begin{document}
\title{On homological properties of conic-line arrangements with simple singularities}
\author{Artur Bromboszcz}
\date{\today}
\maketitle
\thispagestyle{empty}

\begin{abstract}
We study arrangements of smooth conics and lines in the complex projective plane whose singularities are limited to nodes, tacnodes, and ordinary triple points. The first part of the paper gives numerical restrictions for plus-one generated conic arrangements with defect $\nu(C)=3$ and explains how these restrictions interact with B\'ezout's theorem, the Dimca--Sernesi bound for the minimal degree of a Jacobian syzygy, and Hirzebruch-type inequalities. In particular, the possible numbers of conics are bounded, and the exceptional low-degree cases are separated from those that remain open. The second part concerns arrangements of total degree at most $6$. We identify the weak and strong Ziegler pairs occurring in the database recorded in the Appendix. 
\keywords{plus-one generated curves, conic-line arrangements, plane curve singularities, Ziegler pairs.}
\subclass{14N20, 14H50, 32S25.}
\end{abstract}

\section{Introduction}

The topology and homological algebra of plane curve arrangements are strongly influenced by the way in which the irreducible components meet. For line arrangements this relationship has been studied extensively, but arrangements containing higher-degree components display additional phenomena. Already for arrangements of smooth conics, tangencies and higher multiple points may produce the same elementary numerical data while leading to different Jacobian syzygies and different Milnor algebras.

In this paper we work over $\mathbb P^2_{\mathbb C}$ and consider reduced arrangements whose components are smooth conics and lines. We restrict the singularities to three simple types: nodes, tacnodes, and ordinary triple points. This restriction is narrow enough to make a finite low-degree classification feasible, but broad enough to include examples where the combinatorics does not determine the homological type.

Our first goal is to understand plus-one generated conic arrangements with defect $\nu(C)=3$. We combine the Dimca--Sernesi lower bound for the minimal degree of a Jacobian syzygy with B\'ezout's theorem and, when needed, a Hirzebruch-type inequality for conic arrangements. This gives strong numerical restrictions: if an arrangement of $k$ smooth conics is plus-one generated with defect $3$, then $k\leq 6$, and the possible weak combinatorics are sharply constrained. We also exhibit an explicit arrangement of four conics realizing the weak combinatorics $(n_2,t_3,n_3)=(3,9,1)$. This example is included to separate two issues which should not be confused: geometric realizability of a weak combinatorics and existence of a realization with the desired homological type.

The second goal is to compare weak and strong combinatorial data with homological data. We use a database of conic-line arrangements of degree at most $6$, recorded in the appendix, and compute the minimal graded free resolutions of the corresponding Milnor algebras. This produces weak Ziegler pairs, where the weak combinatorics agree but the resolutions differ, and strong Ziegler pairs, where even the stronger incidence data agree while the Jacobian syzygy modules are not isomorphic.

The paper is organized as follows. Section~\ref{sec:POG3} studies plus-one generated arrangements of conics with defect $\nu(C)=3$. Section~\ref{sec:CL} records a geometric restriction for arrangements consisting of two conics and one line; this explains why some weak combinatorics cannot be realized. Section~\ref{Ziegler} contains the classification of weak and strong Ziegler pairs among the arrangements in the database. The appendix contains the database itself and is used as the finite source of the examples appearing in the final section. 

All necessary symbolic computations were performed using \texttt{SINGULAR} \cite{Singular}.

\section{Preliminaries}
We recall the notation and the numerical tools used throughout the paper. Let $C:f=0$ be a reduced curve in $\mathbb{P}^{2}_{\mathbb{C}}$, where $f\in S:=\mathbb{C}[x,y,z]$ is homogeneous of degree $d$. We write
\[
J_f=\langle \partial_x f,\partial_y f,\partial_z f\rangle
\]
for the Jacobian ideal and
\[
M(f)=S/J_f
\]
for the Milnor algebra of $C$. The module of Jacobian syzygies is
\[
{\rm AR}(f)=\{(a,b,c)\in S^{\oplus 3}:a\partial_xf+b\partial_yf+c\partial_zf=0\},
\]
and its first non-zero degree is
\[
{\rm mdr}(f)=\min\{r\geq0:{\rm AR}(f)_r\neq0\}.
\]

\begin{definition}
We say that $C$ is an $s$-syzygy curve if the Milnor algebra $M(f)$ has a minimal graded free resolution of the form
\[
0 \rightarrow \bigoplus_{i=1}^{s-2}S(-e_i) \rightarrow \bigoplus_{j=1}^{s}S(1-d-d_j) \rightarrow S^3(1-d)\rightarrow S\rightarrow M(f)\rightarrow0,
\]
where $e_1\leq\cdots\leq e_{s-2}$ and $1\leq d_1\leq\cdots\leq d_s$. The tuple $(d_1,\ldots,d_s)$ is called the tuple of exponents of $C$. With this convention one has $d_1={\rm mdr}(f)$.
\end{definition}

\begin{definition}
A $2$-syzygy reduced plane curve $C:f=0$ of degree $d$ with exponents $(d_1,d_2)$ is called \textbf{free} if $d_1+d_2=d-1$.
\end{definition}

\begin{remark} \label{rem:epsilon}
If $C$ is not free, then the shifts in the left-hand term of the above resolution can be written in the form
\[
e_j=d+d_{j+2}-1+\varepsilon_j,\qquad j=1,\ldots,s-2,
\]
for some integers $\varepsilon_j\geq1$; see \cite{Cremona}. In particular,
\[
d_1+d_2=d-1+\sum_{j=1}^{s-2}\varepsilon_j.
\]
Thus the integers $\varepsilon_j$ measure the deviation from freeness.
\end{remark}

\begin{definition}
Let $C:f=0$ be a $3$-syzygy reduced plane curve of degree $d$ with exponents $(d_1,d_2,d_3)$. We say that $C$ is \textbf{plus-one generated} (POG for short) if
\[
d_1+d_2=d.
\]
\end{definition}

\begin{remark}
A plus-one generated curve is called \textbf{minimal plus-one generated} (MPOG for short) if $d_3=d_2+1$, and it is called \textbf{nearly free} if $d_3=d_2$.
\end{remark}

For a reduced curve $C$, let
\begin{equation}
    \tau(C)=\sum_{p\in{\rm Sing}(C)}\tau_p,
    \label{eq:Trurina}
\end{equation}
where $\tau_p$ is the local Tjurina number at $p$. We will repeatedly use the following characterization of plus-one generated curves by Dimca and Sticlaru \cite{DS20}.

\begin{proposition}[Dimca--Sticlaru]
\label{Prop:Tau_POG}
Let $C:f=0$ be a reduced $3$-syzygy curve of degree $d\geq3$ with exponents $(d_1,d_2,d_3)$. Then $C$ is plus-one generated if and only if
\[
\tau(C)=(d-1)^2-d_1(d-d_1-1)-(d_3-d_2+1).
\]
\end{proposition}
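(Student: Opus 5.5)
The plan is to compute $\tau(C)$ from the Hilbert function of $M(f)$. For a reduced curve, $\tau(C)$ equals $\dim M(f)_k$ for all $k\gg0$, so $\tau(C)$ is the value obtained by evaluating the Hilbert polynomial of $M(f)$, a constant, read off from the minimal resolution. For a $3$-syzygy curve with exponents $(d_1,d_2,d_3)$ the resolution is
\[
0\r S(-e)\r S(1-d-d_1)\oplus S(1-d-d_2)\oplus S(1-d-d_3)\r S^3(1-d)\r S\r M(f)\r 0,
\]
with a single second syzygy shift $e=e_1$. By Remark~\ref{rem:epsilon}, $e=d+d_3-1+\varepsilon$ with $\varepsilon=d_1+d_2-(d-1)\geq 1$.

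First I impose that the Hilbert polynomial of $M(f)$ is constant. Write $\chi(k)=\binom{k+2}{2}$ and
\[
\dim M(f)_k=\chi(k)-3\chi(k+1-d)+\sum_{i=1}^3\chi(k+1-d-d_i)-\chi(k-e).
\]
The degree-$2$ terms cancel since $1-3+3-1=0$. Vanishing of the linear coefficient gives the relation $e=d_1+d_2+d_3+2-d$. This is consistent with the formula for $e$ above and serves as a check. The constant term is then a polynomial expression in $d,d_1,d_2,d_3$; call it $T(d,d_1,d_2,d_3)$. Equivalently, I can use the standard identity $\sum(\text{shift})^2$ alternating, since $\chi(k+a)$ contributes $a$-quadratic terms. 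This yields $\tau(C)=(d-1)^2-\tfrac12\bigl(\sum_i (d_i+d-1)^2 - 3(d-1)^2 - (e)^2 + \ldots\bigr)$. I will just carry out this elementary expansion.

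Next, substitute $d_2=d-d_1+\delta$, i.e.\ $\varepsilon=\delta+1$, and simplify $T$. The expected outcome is
\[
\tau(C)=(d-1)^2-d_1(d-d_1-1)-\varepsilon\,(d_3-d_2+\varepsilon),
\]
matching the known formula for $3$-syzygy curves from \cite{DS20}. Getting this closed form is the main computational step and the likely obstacle: the algebra is routine but error-prone. I would verify it on the nearly free case ($\varepsilon=1$, $d_3=d_2$), where it must give $(d-1)^2-d_1(d-d_1-1)-1$.

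Finally, set $g(\varepsilon)=\varepsilon(d_3-d_2+\varepsilon)$. It is strictly increasing for $\varepsilon\geq1$ because $d_3\geq d_2$. Hence $\tau(C)=(d-1)^2-d_1(d-d_1-1)-(d_3-d_2+1)$ holds iff $\varepsilon=1$, i.e.\ iff $d_1+d_2=d$, which is the POG condition. Both directions follow at once, using $d\geq3$ only to ensure the resolution shape is as stated.
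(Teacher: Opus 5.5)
The paper gives no proof of this statement; it quotes it from \cite{DS20}. Your strategy is sound: read $\tau(C)$ off the constant Hilbert polynomial of $M(f)$, obtain $\tau(C)=(d-1)^2-d_1(d-d_1-1)-\varepsilon(d_3-d_2+\varepsilon)$, and use that $\varepsilon\mapsto\varepsilon(d_3-d_2+\varepsilon)$ is strictly increasing for $\varepsilon\geq1$. The closed form you anticipate is correct. The final equivalence with $\varepsilon=1$, i.e.\ $d_1+d_2=d$, then follows exactly as you say, with $\varepsilon\geq1$ supplied by Remark~\ref{rem:epsilon}.

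However, the computation you sketch is wrong as written. Since it is the whole content of the proof and you leave it undone, it has to be fixed.

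\textbf{The value of $e$.} The linear coefficient of $\dim M(f)_k$ is $e-(d_1+d_2+d_3)$, so constancy of the Hilbert polynomial forces $e=d_1+d_2+d_3$. This is also what Remark~\ref{rem:epsilon} gives, since $d+d_3-1+\varepsilon=d_1+d_2+d_3$. Your relation $e=d_1+d_2+d_3+2-d$ is \emph{not} consistent with that formula, contrary to what you claim. It would leave linear coefficient $2-d\neq 0$, and if used in the constant term it yields a wrong $\tau(C)$.

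\textbf{The constant term.} The ``standard identity'' you half-write is also not correct: it has a spurious leading $(d-1)^2$ and the wrong sign in front of the bracket. With the correct $e$, the constant and linear contributions of the shifts cancel, and
\[
\tau(C)=\frac12\Bigl(\sum_{i=1}^{3}(d-1+d_i)^2-3(d-1)^2-e^2\Bigr)=(d-1)(d_1+d_2+d_3)-(d_1d_2+d_1d_3+d_2d_3).
\]
Now substitute $d_2=(d-1-d_1)+\varepsilon$. This turns the expression into $(d-1)^2-d_1(d-1-d_1)-\varepsilon(d_3-d_2+\varepsilon)$, which is exactly your target. As a check, for $W^{3,0}_{14,1}$ with $(d;d_1,d_2,d_3)=(6;3,3,5)$ it gives $55-39=16$, matching the table.

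With this computation written out in place of the deferred ``elementary expansion'', your argument is a complete proof.
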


\begin{definition}
For a plus-one generated curve with exponents $(d_1,d_2,d_3)$, the integer
\[
\nu(C):=d_3-d_2+1
\]
is called the \textbf{defect} of $C$.
\end{definition}

Thus a POG curve with $d_3>d_2$ has $\nu(C)\geq2$. The case $\nu(C)=2$ for conic arrangements with nodes, tacnodes, and ordinary triple points was classified in \cite[Theorem 3.1]{MP24}. The next natural case is $\nu(C)=3$, which is the subject of Section~\ref{sec:POG3}.

We also need the more general hierarchy of types introduced in \cite{NewH}.

\begin{definition}
Let $C:f=0$ be an $s$-syzygy reduced plane curve of degree $d$ with exponents $(d_1,d_2,\ldots,d_s)$. The \textbf{type} of $C$ is the non-negative integer
\[
t(C)=d_1+d_2+1-d.
\]
\end{definition}

A free curve has type $0$, while a plus-one generated curve has type $1$. For later comparison with non-POG arrangements we recall the first two higher types.

\begin{proposition} \label{prop:typ2}
Let $C\subset \mathbb P^2_{\mathbb C}$ be an $s$-syzygy reduced plane curve of degree $d$. In the notation of Remark~\ref{rem:epsilon}, curves of type $2$ occur in the following two forms:
\begin{itemize}
\item type $\mathbf{2A}$: $s=3$ and $\varepsilon_1=2$;
\item type $\mathbf{2B}$: $s=4$ and $\varepsilon_1=\varepsilon_2=1$.
\end{itemize}
For both forms one has $d_1+d_2=d+1$.
\end{proposition}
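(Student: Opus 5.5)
From Remark~\ref{rem:epsilon}, for a non-free $s$-syzygy curve of degree $d$ we have $d_1+d_2=d-1+\sum_{j=1}^{s-2}\varepsilon_j$ with every $\varepsilon_j\geq 1$. I will combine this with the definition of type, $t(C)=d_1+d_2+1-d$, which gives the identity
\[
t(C)=\sum_{j=1}^{s-2}\varepsilon_j .
\]
The statement is then a classification of the ways to write $2$ as a sum of $s-2$ positive integers.

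The steps, in order, are as follows. First, a curve of type $2$ is not free, since free curves have type $0$. Hence Remark~\ref{rem:epsilon} applies, and in particular $s\geq 3$. (If $s=2$ and the curve is not free, the convention is that the resolution has no left-hand term, and the sum is empty. By the cited result of \cite{Cremona}, a $2$-syzygy curve is automatically free with $d_1+d_2=d-1$, so $s=2$ forces type $0$.) Next, setting $t(C)=2$ gives $\sum_{j=1}^{s-2}\varepsilon_j=2$ with each $\varepsilon_j\geq 1$. This forces $s-2\in\{1,2\}$. If $s-2=1$, then $s=3$ and $\varepsilon_1=2$, which is type $\mathbf{2A}$. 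If $s-2=2$, then $s=4$ and $\varepsilon_1=\varepsilon_2=1$, which is type $\mathbf{2B}$. Finally, in both cases $d_1+d_2=d-1+2=d+1$, either directly from the remark or from $t(C)=2$.

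Conversely, each listed form has $\sum\varepsilon_j=2$, hence type $2$. So the two forms are exactly the type-$2$ curves.

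The only delicate point is the appeal to the structural fact from \cite{Cremona}: that the shifts $e_j$ can be written as $d+d_{j+2}-1+\varepsilon_j$ with $\varepsilon_j\geq 1$, and that the displayed sum formula holds. I take this as given, as the paper does. Once it is granted, the argument is purely arithmetic. I would also note, but not need, that for $s=3$ we recover Proposition~\ref{Prop:Tau_POG}'s setting shifted by one: type $\mathbf{2A}$ is the $3$-syzygy analogue of POG with $d_1+d_2=d+1$.
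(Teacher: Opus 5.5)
Your argument is correct and is essentially the intended one. The paper gives no proof here and simply recalls the classification from \cite{NewH}; there it follows, as in your proposal, from the identity $t(C)=\sum_{j=1}^{s-2}\varepsilon_j$ (a consequence of Remark~\ref{rem:epsilon}) and the decomposition of $2$ into positive parts. One minor point: the fact that $s=2$ forces freeness is really a Hilbert--Burch statement rather than something specific to \cite{Cremona}, but this does not affect the argument.
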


\begin{proposition} \label{prop:typ3}
Let $C\subset \mathbb P^2_{\mathbb C}$ be an $s$-syzygy reduced plane curve of degree $d$. In the notation of Remark~\ref{rem:epsilon}, curves of type $3$ occur in the following forms:
\begin{itemize}
\item type $\mathbf{3A}$: $s=3$ and $\varepsilon_1=3$;
\item type $\mathbf{3B}$: $s=4$, $\varepsilon_1=1$, and $\varepsilon_2=2$;
\item type $\mathbf{3B'}$: $s=4$, $\varepsilon_1=2$, and $\varepsilon_2=1$;
\item type $\mathbf{3C}$: $s=5$ and $\varepsilon_1=\varepsilon_2=\varepsilon_3=1$.
\end{itemize}
For all these forms one has $d_1+d_2=d+2$; see \cite{typ3}.
\end{proposition}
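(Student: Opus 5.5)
The plan is to derive everything from Remark~\ref{rem:epsilon}. Type $3$ means $d_1+d_2+1-d=3$, i.e.\ $d_1+d_2=d+2$. Comparing with $d_1+d_2=d-1+\sum_{j=1}^{s-2}\varepsilon_j$ gives
\[
\sum_{j=1}^{s-2}\varepsilon_j=3 .
\]
A type $3$ curve is not free, since free curves have type $0$. So every $\varepsilon_j$ is an integer with $\varepsilon_j\geq 1$. The classification then reduces to listing the ordered tuples $(\varepsilon_1,\dots,\varepsilon_{s-2})$ of positive integers summing to $3$.

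The enumeration splits into cases according to $s-2$:
\begin{itemize}
\item $s-2=0$ is impossible, because the sum would be $0$ (this is the free case).
\item $s-2=1$ forces $\varepsilon_1=3$, giving type $\mathbf{3A}$.
\item $s-2=2$ gives the compositions $(1,2)$ and $(2,1)$, namely $\mathbf{3B}$ and $\mathbf{3B'}$.
\item $s-2=3$ forces $(1,1,1)$, giving $\mathbf{3C}$.
\item $s-2\geq 4$ is impossible, because the sum would be at least $4$.
\end{itemize}
The identity $d_1+d_2=d+2$ holds in all cases, by the definition of type.

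The ordering matters, so $(1,2)$ and $(2,1)$ must be kept as genuinely distinct forms. The $e_j$ are nondecreasing, which gives
\[
d_{j+2}+\varepsilon_j\leq d_{j+3}+\varepsilon_{j+1}.
\]
This constraint does not force any order on the $\varepsilon_j$ when $d_3<d_4$, so both orderings can occur a priori.

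The only subtle point is the justification of Remark~\ref{rem:epsilon} itself, namely $\varepsilon_j\geq1$ and the identity for $d_1+d_2$. Both come from \cite{Cremona}: the identity follows from comparing the Hilbert polynomial, or equivalently the degree of the Hilbert--Burch data, of the resolution. I will cite that rather than reprove it; this is where \cite{typ3} is invoked. If the statement is also meant to assert that each form actually occurs, that would require examples and is not part of this enumeration.
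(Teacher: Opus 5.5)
Your argument is correct and is the reasoning the paper relies on: it gives no separate proof, only citing \cite{typ3} on top of Remark~\ref{rem:epsilon}, and once $\sum_j\varepsilon_j=t(C)=3$ with every $\varepsilon_j\geq 1$, the four forms are exactly the compositions of $3$. Your ordering remark can be sharpened: if $d_3=d_4$ then $e_1\leq e_2$ forces $\varepsilon_1\leq\varepsilon_2$, so $(\varepsilon_1,\varepsilon_2)=(2,1)$ requires $d_3<d_4$ (for instance, exponents $(6;4,4,4,4)$ force $(e_1,e_2)=(10,11)$, i.e.\ $\varepsilon=(1,2)$, which conflicts with the appendix labelling such curves as $\mathbf{3B'}$).
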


We now fix the two levels of combinatorial information used in the definition of Ziegler pairs.

\begin{definition}\label{def:WeakComb}
Let $C=\{C_1,\ldots,C_n\}\subset\mathbb P^2_{\mathbb C}$ be a reduced curve whose irreducible components are smooth. The \textbf{weak combinatorics} of $C$ is the vector
\[
\mathcal K(C)=(k_d,k_{d-1},\ldots,k_1;m_1,\ldots,m_p),
\]
where $k_i$ is the number of irreducible components of degree $i$, and $m_j$ is the number of singular points of topological type $M_j$.
\end{definition}

\begin{definition} \label{def:StrongComb}
Let $C\subset\mathbb P^2_{\mathbb C}$ be a reduced curve. The \textbf{strong combinatorial type} of $C$ is the datum
\[
\mathcal W(C)=(\mathbf i,\bar d,{\rm Sing}(C),\Sigma,\delta,\iota,\mathbf r),
\]
where
\begin{itemize}
\item the elements of $\mathbf i$ are in bijection with the irreducible components of $C$;
\item $\bar d:\mathbf i\rightarrow\mathbb N$ assigns to each component its degree;
\item ${\rm Sing}(C)$ is the set of singular points of $C$;
\item $\Sigma$ is the set of topological types occurring among points of ${\rm Sing}(C)$;
\item $\delta:{\rm Sing}(C)\rightarrow\Sigma$ assigns to each singular point its topological type;
\item $\iota(p)\subseteq\mathbf i$ is the set of components passing through $p$;
\item $\mathbf r=(r_j)_j$, where $r_j$ is the number of irreducible components containing exactly $j$ singular points.
\end{itemize}
\end{definition}

\begin{definition}
\label{iso}
Two reduced plane curves $C_1$ and $C_2$ have the same strong combinatorial type if there are bijections
\[
\phi_r:\mathbf i_1\rightarrow\mathbf i_2,
\qquad
\phi_S:{\rm Sing}(C_1)\rightarrow{\rm Sing}(C_2)
\]
such that $\bar d_1=\bar d_2\circ\phi_r$, $\delta_1=\delta_2\circ\phi_S$, and
\[
\iota_1(p)=\iota_2(\phi_S(p))
\quad\text{for every }p\in{\rm Sing}(C_1).
\]
In addition, the vectors $\mathbf r_1$ and $\mathbf r_2$ are required to coincide.
\end{definition}

\begin{remark}
The datum $\mathcal W(C)$ determines $\mathcal K(C)$, but the converse is not true. The weak combinatorics records only the numbers of components and singularities of each type, while $\mathcal W(C)$ also remembers the incidence relation between singular points and components.
\end{remark}

Finally, we recall the notion of the Poincar\'e-type polynomial that will be used in what follows. Let us emphasize that this notion was originally introduced in \cite{PokoraPoly} in the setting of free plane curves, including arrangements of conics and lines. It was later extended in \cite{BJP} to the broader class of plus-one generated arrangements of conics and lines, and finally in \cite{BJ} to the arbitrary reduced plane curve. For an $s$-syzygy conic arrangement $C$ of degree $d=2k\geq4$ and exponents $(d_1,d_2,d_3,\ldots,d_s)$ with $s\geq3$, define
\begin{equation}
\mathcal{P}(C,d_3;t):=1+2kt+\left(\sum_{r\geq2}(r-1)n_r+t_3+t_5+t_7+2j+2k-d_3\right)t^2,
\label{eq:CombPoly}
\end{equation}
where $n_r$ denotes the number of ordinary quasi-homogeneous $r$-tuple points, $t_i$ is the number of singularities of type $A_i$ for $i\in\{3,5,7\}$, and $j$ is the number of singularities of type $J_{2,0}$. We use Arnold's notation for local normal forms; see \cite{Arnold}.

\begin{corollary} \label{prop:CombPoly}
Let $C\subset\mathbb P^2_{\mathbb C}$ be a $3$-syzygy arrangement of $k\geq2$ conics with only $n_2$ nodes, $t_3$ tacnodes, and $n_3$ ordinary triple points, and let $(d_1,d_2,d_3)$ be its exponents. Then
\[
\mathcal P(C,d_3;t)=1+2kt+(n_2+t_3+2n_3+2k-d_3)t^2.
\]
\end{corollary}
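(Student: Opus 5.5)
This corollary is a direct specialization of the general definition \eqref{eq:CombPoly} of $\mathcal P(C,d_3;t)$. No homological input is needed beyond the fact that $C$ is $3$-syzygy, so that the exponent $d_3$ exists and the polynomial is defined. The plan is to identify which local singularity types from the general formula occur for an arrangement of $k$ smooth conics with only nodes, tacnodes and ordinary triple points. I would then substitute the corresponding counts into the quadratic coefficient.

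First, I would identify the singularities in Arnold's notation. A node is an ordinary double point, i.e.\ an ordinary quasi-homogeneous $2$-tuple point, type $A_1$. An ordinary triple point is an ordinary $3$-tuple point, type $D_4$. Both are quasi-homogeneous, since their local equations are products of distinct lines through the point. A tacnode is a singularity of type $A_3$. Two smooth conics tangent with intersection multiplicity $2$ have local equation $y(y-x^2)=0$ up to analytic change of coordinates, which is analytically equivalent to $y^2-x^4$.

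Second, under the hypotheses of the statement, the counts entering \eqref{eq:CombPoly} are as follows:
\[
n_2 \text{ (nodes)},\qquad n_3 \text{ (triple points)},\qquad n_r=0 \text{ for } r\geq 4,\qquad t_3 \text{ (tacnodes)},\qquad t_5=t_7=0,\qquad j=0.
\]
The vanishing holds because no points of type $A_5$, $A_7$, $J_{2,0}$ or ordinary $r$-fold points with $r\ge4$ occur. One point requires care: no tacnode may be double-counted as an $n_r$. A tacnode is not an ordinary multiple point, since its two branches share a tangent line, so it contributes only to $t_3$.

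Third, I would substitute. The sum $\sum_{r\geq2}(r-1)n_r$ reduces to $(2-1)n_2+(3-1)n_3=n_2+2n_3$. The quadratic coefficient then becomes $n_2+2n_3+t_3+2k-d_3$. The degree is $d=2k$, so the linear term is $2kt$. This yields exactly the stated formula.

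The only potential obstacle is the justification that a tangency of two smooth conics in such an arrangement is precisely $A_3$ and not a higher $A_{2m-1}$. This is built into the hypothesis: ``tacnode'' means $A_3$ by convention, and higher-contact tangencies are excluded since the singularities are limited to the three listed types. With this convention fixed, the proof is a one-line substitution.
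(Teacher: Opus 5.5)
Your proposal is correct and follows the same route as the paper. It identifies nodes as ordinary double points, tacnodes as $A_3$ singularities, and ordinary triple points as contributing coefficient $2$ to $\sum_{r\geq 2}(r-1)n_r$, with $t_5=t_7=j=0$ and $n_r=0$ for $r\geq 4$, and then substitutes into the defining formula for $\mathcal P(C,d_3;t)$; your extra checks (a tacnode is not an ordinary point, and $y(y-x^2)$ is analytically $A_3$) simply spell out what the paper's one-line argument leaves implicit.
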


\begin{proof}
In the singularity types considered here, nodes are ordinary double points, tacnodes are singularities of type $A_3$, and ordinary triple points contribute with coefficient $2$ in the sum $\sum_{r\geq2}(r-1)n_r$. Substituting these contributions into \eqref{eq:CombPoly} gives the stated formula.
\end{proof}

\section{Plus-one generated conic arrangements with only nodes, tacnodes, and ordinary triple intersection points with \texorpdfstring{$\nu(C)=3$}{nu(C)=3}} \label{sec:POG3}

We now study plus-one generated arrangements of $k\geq2$ smooth conics with defect $\nu(C)=3$. The strategy is to translate the homological assumption into numerical restrictions on $n_2,t_3,n_3$, and then compare these restrictions with geometric constraints coming from intersection theory. 
\noindent
We will use the following theorem of Dimca--Sernesi \cite[Theorem 2.1]{Sernesi}: if $C$ is a reduced plane curve of degree $d$ with only weighted homogeneous singularities, then 
\begin{equation}\tag{$\triangle$}\label{eqtriangle}
    \text{mdr}(C)\geq\alpha_C\cdot d-2,
\end{equation}
where $\alpha_C$ is the minimum of the Arnold exponents $\alpha_p$ of the singular points $p$ of $C$. 

Recall that a polynomial $f(x,y)=\sum_{u,v\in\mathbb{N}}c_{u,v}x^uy^v$ with local analytic coordinates $x,y$ centered at $p=(0,0)$ and $c_{u,v}\in\mathbb{C}$, is called \textit{weighted homogeneous} of type $(w_1,w_2;1)$ with weights $0<w_j\leq1/2$, if for every non-zero coefficient $c_{u,v}$, the following condition holds:
$$uw_1+vw_2=1.$$

\begin{definition}
For such a polynomial, the \textbf{Arnold exponent} (also known as the \textit{log-canonical threshold} $c_0(f)$) at the origin $p$ is the sum of its weights, i.e.
$$\alpha_p=w_1+w_2.$$
\end{definition}

\begin{example}
    Let $f(x,y)=x^3+xy^2$, which defines an ordinary triple point at $p=(0,0)$. Since $3w_1=1$ and $w_1+2w_2=1$ yield $w_1=w_2=1/3$, the Arnold exponent at $p$ is: $$\alpha_p=\frac{1}{3}+\frac{1}{3}=\frac{2}{3}.$$
\end{example}

For nodes, tacnodes, and ordinary triple points the corresponding Arnold exponents are $1$, $3/4$, and $2/3$, respectively. Hence the smallest possible Arnold exponent in our setting is $2/3$.
        \begin{proposition} \label{prop:d1}
            Let $C: f=0$ be a plus-one generated arrangement of $k\geq 2$ conics with only nodes, tacnodes, and ordinary triple intersection points as singularities. Then
            $$\frac{4}{3}k-2 \leq d_1\leq k.$$
            In particular, we have $k\leq 6$.
        \end{proposition}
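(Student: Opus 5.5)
The proof has two independent halves: a lower bound on $d_1$ from the Dimca--Sernesi inequality, and an upper bound from the plus-one generated condition. Comparing them then bounds $k$.

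\emph{Lower bound.} The arrangement has degree $d=2k$. Its only singularities are nodes, tacnodes and ordinary triple points, all of which are weighted homogeneous (normal forms $xy$, $x^2+y^4$, $x^3+xy^2$ up to analytic change). So the Dimca--Sernesi inequality \eqref{eqtriangle} applies. The Arnold exponents recalled above are $1$, $3/4$ and $2/3$, so $\alpha_C\geq 2/3$. Since ${\rm mdr}(f)=d_1$, this gives
\[
d_1\geq \alpha_C\cdot 2k-2\geq \tfrac{2}{3}\cdot 2k-2=\tfrac{4}{3}k-2 .
\]
If no triple points occur, $\alpha_C$ is larger and the bound only improves, so the stated inequality holds in all cases. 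The only point to check is that the hypotheses of \cite[Theorem 2.1]{Sernesi} (reducedness and weighted homogeneity of all singularities) are met. This is immediate from the definitions, since the components are distinct smooth conics.

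\emph{Upper bound.} $C$ is plus-one generated, so $d_1+d_2=d=2k$ and $d_1\leq d_2$. Hence $2d_1\leq 2k$, that is, $d_1\leq k$.

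\emph{Consequence.} Combining the two bounds gives $\tfrac43 k-2\leq k$, i.e. $\tfrac13 k\leq 2$, so $k\leq 6$.

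I do not expect a real obstacle. The only delicate point is justifying that $\alpha_C$ is the minimum over the singularities actually present and is at least $2/3$. One could also note that taking $\alpha_C=2/3$ always gives a valid (weaker) inequality, since the bound is monotone in $\alpha_C$.
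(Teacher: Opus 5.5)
Your proof is correct and follows the paper's argument step for step. The Dimca--Sernesi bound with $\alpha_C\geq 2/3$ gives $d_1\geq \tfrac43 k-2$, the plus-one generated condition gives $2d_1\leq d_1+d_2=2k$, and combining the two yields $k\leq 6$.
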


\begin{proof}
For nodes, tacnodes, and ordinary triple points the Arnold exponents are respectively
\[
1,\qquad \frac34,\qquad \frac23.
\]
Hence, for any arrangement whose singularities are among these three types, one has
\[
\alpha_C\geq \frac23.
\]
By the Dimca--Sernesi bound \eqref{eqtriangle},
\[
d_1=\operatorname{mdr}(C)\geq \alpha_C\cdot 2k-2
\geq \frac23\cdot 2k-2
=
\frac43k-2.
\]
Moreover, since \(C\) is plus-one generated with exponents
\[
d_1\leq d_2\leq d_3,
\]
we have
\[
2d_1\leq d_1+d_2=2k.
\]
Therefore
\[
d_1\leq k.
\]
Thus
\[
\frac43k-2\leq d_1\leq k.
\]
In particular,
\[
k\leq 6.
\]
This proves the assertion.
\end{proof}


We now illustrate these preliminaries in the following example.
\begin{example} \label{ex:4k}
    Assume that we have an arrangement $C\subset\mathbb{P}^2_\mathbb{C}$ of $4$ conics with $n_2$ nodes, $t_3$ tacnodes and $n_3$ ordinary triple points as singularities which is a plus-one generated curve with defect $\nu(C)=3$.
Using the bounds for $\text{mdr}(C)=d_1$ established in Proposition~\ref{prop:d1}, for $k=4$ we obtain $d_1=4$. 
    Therefore, under our assumptions, we have $(d_1,d_2,d_3)=(4,4,6)$, and thus $\tau(C)=34$. 
    Using the formula for the total Tjurina number \eqref{eq:Trurina} and B\'ezout's theorem, the following system of equations holds:
    $$\begin{cases}
n_2+3t_3+4n_3=34 \\
n_2+2t_3+3n_3=4\cdot\binom{4}{2}=24
\end{cases}.$$
 Then $n_2=4-n_3$ and $t_3=10-n_3$. Since $n_3$ is a non-negative integer, the following weak combinatorics are numerically admissible:
    $$\mathcal{K}(C)=(n_2,t_3,n_3)\in\{(4,10,0),(3,9,1),(2,8,2),(1,7,3),(0,6,4)\}.$$
    Moreover, using Corollary~\ref{prop:CombPoly}, the combinatorial polynomial associated with $C$ is the same in each of the above cases, namely
$$\mathcal{P}(C,d_3;t)=1+8t+16t^2=(1+4t)^2.$$
The polynomial $\mathcal{P}$ splits over the rationals. Thus, according to \cite[Theorem 3.2]{BJP}, if an arrangement $C$ of $4$ conics can be realized as a plus-one generated curve with defect $\nu(C)=3$, then the exponents of $C$ must be $(4,4,6)$. 
\end{example}

\begin{remark} \label{rem:5k}
Arguing as in Example~3.4, assume that \(k=5\) and that \(C\) is plus-one generated with defect
\[
\nu(C)=3.
\]
Then Proposition \ref{prop:d1} gives
\[
d_1=5.
\]
Since \(d=10\) and \(d_1+d_2=d\), we get
\[
(d_1,d_2,d_3)=(5,5,7).
\]
Therefore Proposition \ref{Prop:Tau_POG} gives
\[
\tau(C)
=
(d-1)^2-d_1(d-d_1-1)-3
=
9^2-5\cdot 4-3
=
58.
\]
Thus
\[
n_2+3t_3+4n_3=58.
\]
On the other hand, B\'ezout's theorem gives
\[
n_2+2t_3+3n_3
=
2k(k-1)
=
40.
\]
Subtracting the two equations gives
\[
t_3+n_3=18.
\]
Combining this with B\'ezout's equation gives
\[
n_2+n_3=4.
\]
Hence, using only these two numerical equations, one obtains the preliminary list
\[
(n_2,t_3,n_3)\in
\{(4,18,0),(3,17,1),(2,16,2),(1,15,3),(0,14,4)\}.
\]
If the Hirzebruch-type inequality
\[
32k+4n_2+3n_3\geq 10t_3
\]
is also imposed, then the first possibility \((4,18,0)\) is excluded, since
\[
32\cdot 5+4\cdot 4+3\cdot 0
=
176
<
180
=
10\cdot 18.
\]
Consequently, after imposing the Hirzebruch-type inequality, the remaining numerical possibilities are
\[
(n_2,t_3,n_3)\in
\{(3,17,1),(2,16,2),(1,15,3),(0,14,4)\}.
\]
\end{remark}

We record an observation that will be useful for the classification.


\begin{lemma} \label{n2n3}
    Let $C\subset \mathbb{P}^2_{\mathbb{C}}$ be an arrangement of $k\geq 2$ smooth conics admitting only $n_2$ nodes, $t_3$ tacnodes and $n_3$ ordinary triple points. If $C$ is plus-one generated with defect $\nu (C)=3$, then $n_2+n_3\leq4.$
\end{lemma}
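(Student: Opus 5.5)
The plan is to eliminate $t_3$ between the two linear relations used in Example~\ref{ex:4k} and Remark~\ref{rem:5k}, now for general $k$ and general $d_1$, and then bound the result using only the constraint $d_1\le k$ from Proposition~\ref{prop:d1}.

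\textbf{Step 1: the two relations.} Set $d=2k$. Since $C$ is plus-one generated with $\nu(C)=3$, Proposition~\ref{Prop:Tau_POG} gives
\[
\tau(C)=(2k-1)^2-d_1(2k-1-d_1)-3 .
\]
The local Tjurina numbers of a node, a tacnode and an ordinary triple point are $1,3,4$, so
\[
\tau(C)=n_2+3t_3+4n_3 .
\]
Two distinct smooth conics meet in $4$ points counted with multiplicity. A node contributes $1$ to one pair of conics, a tacnode contributes $2$ to one pair, and an ordinary triple point contributes $1$ to each of three pairs. Summing over all pairs, B\'ezout's theorem gives
\[
B:=n_2+2t_3+3n_3=4\binom{k}{2}=2k(k-1).
\]

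\textbf{Step 2: eliminate $t_3$.} Subtracting the B\'ezout relation from the Tjurina relation gives $t_3+n_3=\tau(C)-B$. Substituting back,
\[
n_2+n_3=B-2(t_3+n_3)=3B-2\tau(C).
\]
Expanding with the formula for $\tau(C)$ yields
\[
n_2+n_3=-2k^2+2k+4+2\,d_1(2k-1-d_1).
\]

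\textbf{Step 3: bound the right-hand side.} Put $g(x)=x(2k-1-x)$. This is a concave parabola, symmetric about $x=k-\tfrac12$. From $d_1\le d_2$ and $d_1+d_2=2k$ we get $d_1\le k$ (Proposition~\ref{prop:d1}), and $d_1\ge 1$ by the exponent convention. On the integers the maximum of $g$ is therefore attained at $d_1\in\{k-1,k\}$, where $g=k(k-1)$. Hence
\[
n_2+n_3\le -2k^2+2k+4+2k(k-1)=4,
\]
which is the claim. Equality holds exactly when $d_1\in\{k-1,k\}$, consistent with Example~\ref{ex:4k} and Remark~\ref{rem:5k}.

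The only step needing care is Step 1, namely getting the B\'ezout count right, in particular the triple-point contribution of $3$. After that the argument is an elementary maximization, and the Dimca--Sernesi lower bound is not needed.
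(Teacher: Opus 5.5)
Your proposal is correct and is essentially the paper's argument. Both combine the Dimca--Sticlaru formula $\tau(C)=n_2+3t_3+4n_3$ with the B\'ezout count to eliminate $t_3$, and then bound the quadratic $d_1(2k-1-d_1)$ from above. The only difference is how that bound is extracted: the paper uses non-negativity of the discriminant, which gives $n_2+n_3\le 9/2$, and then rounds down, whereas you maximize over integers directly; your version also yields the exact identity $n_2+n_3=4-2(k-d_1)(k-d_1-1)$.
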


\begin{proof}
    By Proposition~\ref{Prop:Tau_POG}, if $C$ is POG of degree $d=2k\geq4$ and $\nu(C)=3$, then we obtain
    $$r^2-r(2k-1)+(2k-1)^2=\tau(C)+\nu(C)=n_2+3t_3+4n_3+3,$$
    where $r=\text{mdr}(C)=d_1.$ Recall that by B\'ezout's Theorem we have the following count:
    \begin{equation}
        4\cdot \binom{k}{2}=2k(k-1)=n_2+2t_3+3n_3.
        \label{Bezout}
    \end{equation}
    The binomial coefficient on the left-hand side of the above equation $\binom{k}{2}$ counts the number of pairs of curves from the set of $k\geq2$ conics. Combining the previous two equations, we obtain
    $$r^2-r(2k-1)+2k^2-2k-2-(t_3+n_3)=0,$$
    as a quadratic equation in the variable $r$. Since $C$ is plus-one generated, then the above equation must have a solution. Hence, its discriminant $\Delta_r$ is non-negative, i.e.
    $$(2k-1)^2-4\left(2k^2-2k-2-(t_3+n_3)\right)\geq0,$$
    which gives us
    $$t_3+n_3\geq k^2-k-\frac{9}{4}.$$
    Notice that
    $$4\cdot \binom{k}{2}=n_2+n_3+2(t_3+n_3)\geq n_2+n_3+2k^2-2k-\frac{9}{2},$$
    which leads to the desired inequality
    $$n_2+n_3\leq \left\lfloor\frac{9}{2}\right\rfloor=4.$$
\end{proof}

We next provide a combinatorial property of arrangements of at least three smooth conics having nodes, tacnodes, and ordinary triple points as singularities.

\begin{proposition} \label{prop:oneD4}
Let \(C=\{C_1,\ldots,C_k\}\) be an arrangement of \(k\geq 3\) smooth
conics in \(\mathbb P^2_{\mathbb C}\). Assume that the conics have one
common ordinary \(k\)-tuple point \(P\). Then the number of tacnodes of
the arrangement away from \(P\) is at most
\[
\binom{k}{2}=\frac{k(k-1)}{2}.
\]
In particular, for \(k=3\), an arrangement of three smooth conics with
one ordinary triple point has at most three tacnodes.
\end{proposition}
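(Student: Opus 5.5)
The plan is a pairwise Bézout count. Fix two distinct conics \(C_i,C_j\) of the arrangement. They are smooth and distinct, so they share no common component. Bézout's theorem then gives
\[
\sum_{q\in C_i\cap C_j} (C_i\cdot C_j)_q = 4 .
\]
The first step is to read off the local intersection multiplicities. At \(P\) the point is an ordinary \(k\)-tuple point, so all branches through \(P\) are smooth with pairwise distinct tangent lines. Hence \(C_i\) and \(C_j\) meet transversally there and \((C_i\cdot C_j)_P=1\). At a tacnode \(q\) (type \(A_3\)) lying on both \(C_i\) and \(C_j\), the two smooth branches are tangent with contact of order \(2\), so \((C_i\cdot C_j)_q=2\).

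The second step uses the fact that only nodes, tacnodes and ordinary triple points occur away from \(P\). Every tacnode is a double point, so it lies on exactly two of the conics. It therefore contributes to exactly one pair \(\{C_i,C_j\}\).

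The third step bounds the number of tacnodes for one pair. For that pair, the intersection number \(4\) is used up as follows: \(1\) at \(P\), \(2\) at each tacnode shared by \(C_i\) and \(C_j\), and something nonnegative at the remaining points. This gives \(1+2t_{ij}\le 4\), where \(t_{ij}\) is the number of tacnodes on \(C_i\cap C_j\) away from \(P\). Hence \(t_{ij}\le 1\).

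Summing over all pairs gives
\[
t_3=\sum_{i<j}t_{ij}\le\binom{k}{2}.
\]
For \(k=3\) this reads \(t_3\le 3\).

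The only delicate point is the justification that \((C_i\cdot C_j)_P=1\). This follows from the definition of an ordinary \(k\)-tuple point as \(k\) smooth pairwise transversal branches. If one instead allowed \(P\) to be merely a point of multiplicity \(k\), two conics could be tangent at \(P\), and the bound would degrade. Otherwise the argument is a direct Bézout count.
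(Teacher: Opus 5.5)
Your argument is correct and matches the paper's proof: transversality at $P$ gives $I_P(C_i,C_j)=1$, which leaves $3$ units of intersection for each pair. So $2t_{ij}\le 3$ forces $t_{ij}\le 1$, and summing over the $\binom{k}{2}$ pairs gives the bound.

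One small correction to your closing remark: the count only needs $I_P(C_i,C_j)\ge 1$, which holds as soon as $P$ lies on both conics. So tangency at $P$ would not weaken the bound; it would only leave less room for tacnodes.
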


\begin{proof}
    Let $P$ be the only ordinary $k$-tuple point of the arrangement $C=\{C_1,\ldots,C_k\}\subset\mathbb{P}^2_\mathbb{C}$. Then, in particular, $C_i$ and $C_j$ meet transversely at $P$. Hence, by B\'ezout's theorem, the sum of the intersection multiplicities away from $P$ is
    $$\sum_{Q\in(C_i\cap C_j)\backslash\{P\}}I_Q(C_i,C_j) = \deg(C_i)\deg(C_j)-I_P(C_i,C_j)=3.$$
    Therefore, if $t_{ij}$ is the number of tacnodes in $C_i\cap C_j$ away from $P$, then
    $2t_{ij}\leq 3$. Hence $t_{ij}\leq 1$. Summing over all $\binom{k}{2}$ pairs $(i,j)$ gives
    $$\sum_{1\leq i<j\leq k}t_{ij}\leq \binom{k}{2}=\frac{k(k-1)}{2},$$
    which completes the proof.
\end{proof}

Before the main classification theorem, we obtain a non-trivial observation for arrangements of four smooth conics.

\begin{proposition}
\label{391}
The weak combinatorics
\[
        (n_2,t_3,n_3)=(3,9,1)
\]
is geometrically realizable over \(\mathbb C\) by an arrangement of four smooth conics.
\end{proposition}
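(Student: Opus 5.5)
The plan is to construct an explicit arrangement and verify its singularities. First I would fix the incidence pattern that the numbers force, then build conics with that pattern.

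\textbf{Incidence pattern.} Write $C=\{C_1,C_2,C_3,C_4\}$ and let the unique triple point $P$ lie on $C_1,C_2,C_3$. For each pair $C_i,C_j$ with $i,j\leq 3$, the argument of Proposition~\ref{prop:oneD4} leaves intersection multiplicity $3$ away from $P$. The natural choice is to split this as one tacnode plus one node. This gives $3$ tacnodes and $3$ nodes among $C_1,C_2,C_3$. The remaining $6$ tacnodes must then come from $C_4$ being bitangent to each of $C_1,C_2,C_3$, with $P\notin C_4$. This is consistent with B\'ezout's count \eqref{Bezout}: $3+2\cdot 9+3\cdot 1=24$.

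\textbf{Construction.} Bitangency to a fixed conic is easy to impose by taking members of the form $Q+\lambda L^2$. Let
\[
C_4=Q:\ x^2+y^2-z^2=0 .
\]
For $i=1,2,3$, set $\ell_i=x\cos\theta_i+y\sin\theta_i$ with $\theta_i=2\pi(i-1)/3$, and define
\[
C_i:\ a^2(x^2+y^2-z^2)+(\ell_i-az)^2=0,
\]
with a real parameter $0<a<1$. Then:
\begin{itemize}[label=$\circ$]
\item $C_i-Q$ is a multiple of $(\ell_i-az)^2$. The line $\ell_i=az$ is secant to $Q$ because $a<1$, so $C_i$ and $C_4$ are tangent at two points, giving two tacnodes.
\item Each $C_i$ passes through $P=(0:0:1)$, because $-a^2+a^2=0$.
\item For $i\ne j$ we have
\[
C_i-C_j=(\ell_i-\ell_j)(\ell_i+\ell_j-2az).
\]
So $C_i\cap C_j$ is the union of $C_i\cap\{\ell_i=\ell_j\}$ and $C_i\cap\{\ell_i+\ell_j=2az\}$.
\item The first line passes through $P$. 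It meets $C_i$ at $P$ and at one further point, which should be the node.
\item The single free condition is that the second line be tangent to $C_i$. This is a discriminant equation in $a$. By the $\mathbb Z/3$ rotational symmetry, one value of $a$ handles all three pairs at once.
\end{itemize}

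\textbf{Verification.} After solving for $a$ (checking that the root satisfies $0<a<1$, or allowing complex $a$ if not), I would check non-degeneracy:
\begin{itemize}[label=$\circ$]
\item each $C_i$ is smooth;
\item the tangent lines of $C_1,C_2,C_3$ at $P$ are pairwise distinct, so $P$ is an ordinary triple point;
\item the $9$ tangency points, the $3$ nodes and $P$ are pairwise distinct;
\item no node or tangency point lies on a third conic.
\end{itemize}
Each tangency between two smooth conics with local intersection multiplicity $2$ is an $A_3$ singularity. I would confirm the full singular locus and the count $\tau(C)=3+27+4=34$ with \texttt{SINGULAR}.

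\textbf{Main obstacle.} The hard step is ensuring that the tangency equation in $a$ has a root avoiding these coincidences. In particular, the tangency point must not fall onto $Q$ or onto the other line through $P$. If the symmetric ansatz degenerates, I would break the symmetry: use independent lines $L_i$ and coefficients $\lambda_i$, and solve the three tangency conditions numerically.
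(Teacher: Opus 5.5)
\textbf{Gap: the parameter $a$ is never fixed, and the root you expect to use degenerates.}

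Your incidence pattern is the one the paper realizes: a fourth conic bitangent to three conics that share the triple point and meet pairwise in one node and one tacnode away from it. The ansatz $a^2Q+(\ell_i-az)^2$ is also a good one. However, the proposal stops exactly where the proof has to happen. You never determine $a$ and never carry out the non-degeneracy checks, and the root you anticipate does not work.

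Carry out your tangency condition for the pair $(C_1,C_2)$. Its second line is $\ell_3+2az=0$, i.e.\ $x+\sqrt3\,y=4az$. The restriction of $C_1$ to this line is $(4a^2+3)y^2-2\sqrt3\,a(4a^2+3)yz+8a^2(2a^2+1)z^2$, with discriminant $4a^2(4a^2+3)(1-4a^2)$. So tangency occurs only for $a^2=\tfrac14$ or $a^2=-\tfrac34$.

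The only root in $(0,1)$ is $a=\tfrac12$, and it is degenerate. Consider the point $\{\ell_1=\ell_2=az\}=(a:\sqrt3\,a:1)$. It is simultaneously:
\begin{itemize}
\item the intersection of the two lines $\ell_1=\ell_2$ and $\ell_3+2az=0$;
\item the intersection of the chords of contact of $C_1$ and $C_2$ with $Q$.
\end{itemize}
This point lies on $C_1$ iff it lies on $Q$ iff $4a^2=1$. Hence for $a=\tfrac12$, at $(\tfrac12:\tfrac{\sqrt3}{2}:1)$ the would-be node and tacnode of $(C_1,C_2)$ coalesce, giving $I(C_1,C_2)=3$, and $Q$ is tangent to both conics there. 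The six contact points on $Q$ collapse to three points of a non-admissible type. The root $a=-\tfrac12$ gives the same configuration after $z\mapsto -z$.

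\textbf{The fix: take $a^2=-\tfrac34$.}

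The construction is rescued by the other root, say $a=\tfrac{\sqrt3}{2}\imath$. Your clause about complex $a$ allows this, but it still has to be checked. The checks go through as follows.
\begin{itemize}
\item Since $a^2\ne1$, each chord $\ell_i=az$ meets $Q$ in two points.
\item Since $4a^2\ne1$, two such chords never meet on $Q$. As $C_j=(\ell_j-az)^2$ on $Q$, the six contact points are distinct, and each lies only on $Q$ and one $C_i$.
\item Each $C_i$ is smooth: with $\ell_i=x$ its matrix has determinant $-a^4$. Its tangent at $P\notin Q$ is $\ell_i=0$, so $P$ is an ordinary triple point.
\item The restriction above becomes $3z^2$. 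So the tacnode of $(C_1,C_2)$ is the point at infinity $(-\sqrt3:1:0)$, which lies on none of $Q$, $C_3$, $\ell_1=\ell_2$.
\item The residual point of $\ell_1=\ell_2$ on $C_1$ is $(-a:-\sqrt3\,a:1)$. It lies on none of $Q$, $C_3$, $\ell_3+2az=0$, so it is a node.
\end{itemize}
The $\mathbb Z/3$-symmetry then handles the other pairs and gives $(n_2,t_3,n_3)=(3,9,1)$.

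\textbf{Comparison with the paper.}

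Completed this way, your argument is a genuine alternative to the paper's. The paper writes down four asymmetric conics over $\mathbb Q(\imath)$. It certifies them through a parametrization of $Q_0$ and a table of point coordinates. In your family, every intersection is located by hand via the factorization $C_i-C_j=(\ell_i-\ell_j)(\ell_i+\ell_j-2az)$, and the symmetry reduces the check to a single pair.
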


\begin{proof}
Let \(\imath^2=-1\). We consider the following four conics in
\(\mathbb P^2_{\mathbb C}\):
\[
\begin{aligned}
Q_0&: q_0=y^2-4xz=0,\\
Q_1&: q_1=x^2-12\imath xy-146xz+12\imath yz+z^2=0,\\
Q_2&: q_2=256x^2-48\imath xy-137xz+12\imath yz+16z^2=0,\\
Q_3&: q_3=729x^2-108\imath xy-178xz+12\imath yz+9z^2=0.
\end{aligned}
\]
We denote by
\[
        \mathcal C=Q_0\cup Q_1\cup Q_2\cup Q_3
\]
the corresponding conic arrangement.

\noindent
We now describe the singular locus of \(\mathcal C\). The conic \(Q_0\)
has the parametrization
\[
        \phi(t)=(t^2:2t:1).
\]
Substituting this parametrization into the remaining three equations gives
\[
\begin{aligned}
q_1(\phi(t))&=(t^2-12\imath t-1)^2,\\
q_2(\phi(t))&=(16t^2-3\imath t-4)^2,\\
q_3(\phi(t))&=(27t^2-4\imath t-3)^2.
\end{aligned}
\]
The three quadratic factors are square-free and pairwise coprime. Therefore
\(Q_0\) is tangent to each of \(Q_1,Q_2,Q_3\) at two distinct points, and these
six tangency points are pairwise distinct. Consequently, the intersections
\[
        Q_0\cap Q_1,\qquad Q_0\cap Q_2,\qquad Q_0\cap Q_3
\]
contribute six tacnodes.

Next, the conics \(Q_1,Q_2,Q_3\) pass through the point
\[
        P=(0:1:0),
\]
whereas \(P\notin Q_0\), since \(q_0(P)=1\). In the affine chart \(y=1\),
with local coordinates \(u=x/y\) and \(v=z/y\), the linear parts of
\(q_1,q_2,q_3\) at \(P\) are proportional to
\[
        v-u,\qquad v-4u,\qquad v-9u,
\]
respectively. These three tangent directions are distinct, hence \(P\) is an
ordinary triple point of the arrangement.

The remaining pairwise intersections among \(Q_1,Q_2,Q_3\) are the following:
\[
\renewcommand{\arraystretch}{1.4}
\begin{array}{c|c|c}
\text{pair} & \text{node} & \text{tacnode}\\
\hline
(Q_1,Q_2)
&
\left(1:-\dfrac{929\imath}{60}:\dfrac{21}{5}\right)
&
\left(1:-\dfrac{33\imath}{4}:-2\right)
\\
(Q_1,Q_3)
&
\left(1:-\dfrac{151\imath}{12}:10\right)
&
\left(1:-\dfrac{28\imath}{3}:-3\right)
\\
(Q_2,Q_3)
&
\left(1:-\dfrac{3719\imath}{420}:-\dfrac{17}{7}\right)
&
\left(1:\dfrac{5\imath}{12}:6\right).
\end{array}
\]
A direct substitution shows that each of the above points lies on the indicated
pair of conics and on no other component of \(\mathcal C\). Moreover, the local
intersection multiplicities are
\[
\begin{aligned}
I_P(Q_1,Q_2)&=I_P(Q_1,Q_3)=I_P(Q_2,Q_3)=1,\\
I_{N_{12}}(Q_1,Q_2)&=I_{N_{13}}(Q_1,Q_3)=I_{N_{23}}(Q_2,Q_3)=1,\\
I_{T_{12}}(Q_1,Q_2)&=I_{T_{13}}(Q_1,Q_3)=I_{T_{23}}(Q_2,Q_3)=2.
\end{aligned}
\]
Thus \(N_{12},N_{13},N_{23}\) are nodes, while
\(T_{12},T_{13},T_{23}\) are tacnodes.

For each pair of smooth conics the total intersection multiplicity is equal to
\(4\), by B\'ezout's theorem. Hence the points listed above exhaust all pairwise
intersections among \(Q_1,Q_2,Q_3\). Combining this with the six tacnodes
coming from the intersections of \(Q_0\) with \(Q_1,Q_2,Q_3\), we obtain
\[
        n_2=3,\qquad t_3=6+3=9,\qquad n_3=1.
\]
Therefore the weak combinatorics
\[
        (n_2,t_3,n_3)=(3,9,1)
\]
is geometrically realizable over \(\mathbb C\).
\end{proof}

\begin{thm}
Let $C\subset\mathbb P^2_{\mathbb C}$ be an arrangement of $k$ smooth conics with only nodes, tacnodes, and ordinary triple points. Assume that $C$ is plus-one generated with defect $\nu(C)=3$ and that $k\in\{2,3,6\}$. Among the arrangements covered by the classification used in this paper, the only weak combinatorics occurring for such a curve is
\[
\mathcal K(C)=(k;n_2,t_3,n_3)=(3;4,4,0).
\]
Moreover, this weak combinatorics is realized by the arrangement
\[
f=(x^2+y^2-z^2)(4x^2+y^2-4z^2)(x^2+4y^2-4z^2),
\]
whose exponents are $(3,3,5)$.
\end{thm}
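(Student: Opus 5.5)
We treat the three values $k\in\{2,3,6\}$ separately. In each case Proposition~\ref{prop:d1} pins down $d_1$, plus-one generation gives $d_2=d-d_1$, and $\nu(C)=3$ gives $d_3=d_2+2$. Proposition~\ref{Prop:Tau_POG} then gives $\tau(C)=n_2+3t_3+4n_3$. Pairing this with B\'ezout's equation \eqref{Bezout}, exactly as in Example~\ref{ex:4k} and Remark~\ref{rem:5k}, yields a one-parameter family of numerical candidates. Lemma~\ref{n2n3} serves as a consistency check throughout.

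\emph{The case $k=2$.} Here $d=4$ and Proposition~\ref{prop:d1} gives $2/3\le d_1\le 2$. The possibility $d_1=1$ forces $d_2=3$. Then the component of degree $1$ in ${\rm AR}(f)$ forces a conic pair to be a pencil-type configuration, which for two smooth conics means bitangent or quadruple contact, and such pairs are known to be free or nearly free rather than POG with $\nu=3$. The possibility $d_1=2$ gives exponents $(2,2,4)$ and $\tau=9-2-3=4$. Against B\'ezout, $n_2+2t_3=4$ (no triple points are possible with two conics), this leaves $(n_2,t_3)\in\{(4,0),(2,1),(0,2)\}$. These give $\tau=4,5,6$, so only the transverse pair $(4,0)$ survives numerically. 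This is the generic pencil, which is known to be nearly free, with exponents $(2,2,2)$ or similar. A direct Jacobian-syzygy check therefore excludes it, and $d_1=1$ is likewise excluded after checking $\tau$.

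\emph{The case $k=3$.} Here $d=6$ and Proposition~\ref{prop:d1} gives $d_1=3$, so the exponents are $(3,3,5)$ and $\tau=25-6-3=16$. B\'ezout gives $n_2+2t_3+3n_3=12$. Hence $t_3+n_3=4$ and $n_2+n_3=4$, producing the candidates $(4,4,0),(3,3,1),(2,2,2),(1,1,3),(0,0,4)$. Proposition~\ref{prop:oneD4} does not exclude $(3,3,1)$, but each candidate with $n_3\ge1$ must be checked against the database in the Appendix. There we compute the resolution of $M(f)$ for every realization it contains. We expect only $(4,4,0)$ to be POG with $\nu=3$, while the others are either not realizable with three smooth conics or have a different type. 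For the realization we take the given $f$ and verify its singularities directly. The three conics pairwise meet in two tacnodes and otherwise transversally; for instance, the first and second conics meet only at $(0:\pm1:1)$ with multiplicity $2$. A \texttt{SINGULAR} computation then gives exponents $(3,3,5)$.

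\emph{The case $k=6$.} Here Proposition~\ref{prop:d1} forces $d_1=6=k$, so the exponents are $(6,6,8)$ and $\tau=121-30-3=88$. B\'ezout gives $n_2+2t_3+3n_3=60$, hence $t_3+n_3=28$ and $n_2+n_3=4$. We will exclude these candidates with the Hirzebruch-type inequality $32k+4n_2+3n_3\ge 10t_3$ used in Remark~\ref{rem:5k}. With $k=6$ this reads $192+4(4-n_3)+3n_3\ge 10(28-n_3)$, which gives $9n_3\ge 72$, so $n_3\ge 8$. This contradicts $n_3\le4$, so no arrangement exists. The main obstacle is the case $k=2$ and the case $k=3$ with triple points. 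There the numerics alone do not decide, and the argument must rely on explicit syzygy computations over the finite database, which is why the statement is phrased relative to that classification.
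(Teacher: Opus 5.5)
Your plan has the same structure as the paper's: $k=2$ and $k=3$ are decided by the finite database and the \texttt{SINGULAR} resolutions, and $k=6$ is ruled out numerically. The one genuinely different step is $k=6$. The paper first imports the bound $t_3\le\frac49k^2+\frac43k$ from \cite{Qarr}, adapted to include triple points, to force $(n_2,t_3,n_3)=(0,24,4)$, and only then applies the Hirzebruch-type inequality. You instead substitute the whole family $(4-n_3,\,28-n_3,\,n_3)$ into $32k+4n_2+3n_3\ge10t_3$ and get $9n_3\ge72$, contradicting $n_3\le4$. This is shorter and makes the external tacnode bound unnecessary.

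Your reduction for $k=3$ to the list $(4-n_3,4-n_3,n_3)$ is also useful; the paper leaves this inside the database check. To finish it, note that $(1,1,3)$ is missing from the Appendix because it is not realizable. With three conics every ordinary triple point lies on all three components. Three such points use three of the four units of each pairwise B\'ezout number, leaving no room for a tacnode.

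Several intermediate claims are wrong, although none changes the conclusion.
\begin{itemize}
\item For $k=3$, Proposition~\ref{prop:d1} gives $2\le d_1\le3$, not $d_1=3$, so exponents $(2,4,6)$ are a priori possible. This is harmless: $d_1(d-d_1-1)=6$ for both values, so $\tau=16$ and your candidate list are unchanged.
\item The same symmetry gives $\tau=4$ for both $d_1=1$ and $d_1=2$ when $k=2$. So $d_1=1$ also reduces to the four-nodal pair, whose $\operatorname{mdr}$ is $2$, and you do not need the vague pencil-type argument.
\item For $k=2$, two conics meeting in four nodes are not nearly free. A nearly free quartic with $d_1=2$ has $\tau=6\neq4$, and the Appendix entry $W^{2,0}_1$ gives exponents $(2,3,3,3)$, type 2B. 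It is excluded because it is not even a $3$-syzygy curve.
\item In the explicit $f$, not every pair of conics is bitangent; that would give $(0,6,0)$. The first and second conics touch at $(\pm1:0:1)$, and the first and third at $(0:\pm1:1)$. The second and third meet transversally at the four points $(\pm2:\pm2:\sqrt5)$. This is exactly what yields $(4,4,0)$.
\end{itemize}
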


\begin{proof}
We first record the numerical restrictions used in the finite check. Lemma~\ref{n2n3} gives $n_2+n_3\leq4$. Combining B\'ezout's relation
\[
n_2+2t_3+3n_3=2k(k-1)
\]
with this inequality gives
\[
2k(k-1)=n_2+n_3+2(t_3+n_3)\leq4+2(t_3+n_3),
\]
and hence
\[
t_3+n_3\geq k(k-1)-2.
\]
Equivalently, the tacnode count is forced to be large when $k$ is large. A sharper upper bound follows from the same argument as in \cite[Remark 3.3]{Qarr}, with the ordinary triple points included in the count:
\[
t_3\leq \frac49k^2+\frac43k.
\]
For $k\geq4$ we also use the Hirzebruch-type inequality from \cite[Theorem 3.4]{BJP}, namely
\[
32k+4n_2+3n_3\geq10t_3.
\]
Together with B\'ezout's relation this gives the useful lower bound
\[
n_2\geq \left\lceil 4t_3-10k-\frac23k^2\right\rceil.
\]

For $k=2$, a direct inspection of the possible intersections of two smooth conics shows that no arrangement has defect $3$. The only plus-one generated case in this degree has weak combinatorics $(n_2,t_3,n_3)=(2,1,0)$, and it is minimal plus-one generated, so its defect is $2$ rather than $3$.

For \(k=6\), Proposition~3.3 gives
\[
d_1=6.
\]
Since \(C\) is plus-one generated and \(d=12\), we have
\[
d_1+d_2=12,
\]
and therefore
\[
(d_1,d_2,d_3)=(6,6,8).
\]
By Proposition \ref{Prop:Tau_POG},
\[
\tau(C)
=
(12-1)^2-6(12-6-1)-3
=
121-30-3
=
88.
\]
Hence
\[
n_2+3t_3+4n_3=88.
\]
On the other hand, B\'ezout's theorem gives
\[
n_2+2t_3+3n_3
=
2k(k-1)
=
60.
\]
Subtracting the second equation from the first one yields
\[
t_3+n_3=28.
\]
Moreover,
\[
n_2+2t_3+3n_3
=
n_2+n_3+2(t_3+n_3)
=
60,
\]
and hence
\[
n_2+n_3+56=60.
\]
Thus
\[
n_2+n_3=4.
\]
The upper bound
\[
t_3\leq \frac49k^2+\frac43k
\]
gives, for \(k=6\),
\[
t_3\leq \frac49\cdot 36+\frac43\cdot 6
=
16+8
=
24.
\]
Since
\[
t_3+n_3=28,
\]
we get
\[
n_3\geq 4.
\]
Together with
\[
n_2+n_3=4,
\]
this forces
\[
(n_2,t_3,n_3)=(0,24,4).
\]
Now the Hirzebruch-type inequality gives
\[
32k+4n_2+3n_3\geq 10t_3.
\]
For \(k=6\) and \((n_2,t_3,n_3)=(0,24,4)\), this becomes
\[
32\cdot 6+4\cdot 0+3\cdot 4\geq 10\cdot 24,
\]
that is
\[
204\geq 240,
\]
which is impossible. Therefore no plus-one generated arrangement with defect
\[
\nu(C)=3
\]
occurs for \(k=6\).

It remains to discuss $k=3$. The finite classification recorded in the appendix, together with the computation of the minimal free resolutions of the Milnor algebras, leaves exactly one plus-one generated arrangement with defect $3$. It has weak combinatorics
\[
(n_2,t_3,n_3)=(4,4,0).
\]
An explicit representative is
\[
f=(x^2+y^2-z^2)(4x^2+y^2-4z^2)(x^2+4y^2-4z^2).
\]
A direct computation in \texttt{SINGULAR} gives the exponents $(d_1,d_2,d_3)=(3,3,5)$. Since $d=6$, we have $d_1+d_2=d$ and $d_3-d_2+1=3$, so this arrangement is plus-one generated with defect $3$. This proves the assertion.
\end{proof}

\begin{remark}
\label{rem:speculation-k4-k5}
The discussion above using the numerical
formulae for \(\tau(C)\), B\'ezout's theorem, and the Hirzebruch-type
inequality is not sufficient to decide the existence of plus-one generated
arrangements with defect \(\nu(C)=3\) and $k \in \{4,5\}$. For \(k=4\) the numerical
conditions leave the following five weak combinatorics:
\[
(n_2,t_3,n_3)\in
\{(0,6,4),(1,7,3),(2,8,2),(3,9,1),(4,10,0)\}.
\]
If such an arrangement were plus-one generated with \(\nu(C)=3\), then,
by Example~\ref{ex:4k}, its exponents would necessarily be
\[
(d_1,d_2,d_3)=(4,4,6),
\]
and its total Tjurina number would be
\[
\tau(C)=34.
\]
Indeed, each of the five weak combinatorics listed above satisfies
\[
n_2+3t_3+4n_3=34.
\]
Thus, from the point of view of the total Tjurina number, none of these
possibilities can be excluded.

However, the known geometric constructions indicate a different phenomenon.
Let us recall that the most symmetric
realizations of the weak combinatorics
\[
(n_2,t_3,n_3)=(4,10,0)
\]
from \cite[Proposition 4.3]{BJP} are known to be of type \(2B\), and again they do not provide examples of
plus-one generated curves with exponents \((4,4,6)\).

The remaining weak combinatorics
\[
(n_2,t_3,n_3)=(3,9,1)
\]
is especially interesting. Proposition~\ref{391} shows that this weak
combinatorics can be geometrically realized over \(\mathbb C\) by four smooth
conics. Thus it cannot be discarded on purely geometric grounds. Nevertheless,
the construction given in Proposition~\ref{391} should be regarded only as just one realization of the prescribed singularity data. It does not, by itself, imply that the corresponding weak combinatorics cannot be constructed as a plus-one generated arrangement. The homological
question remains open for this particular weak combinatorics: one has to decide
whether there exists a realization with
\[
\operatorname{mdr}(C)=4
\]
and with exactly three minimal Jacobian syzygies of degrees $(4, 4, 6)$.
Equivalently, one has to determine whether the stratum of the realization space
corresponding to
\[
(n_2,t_3,n_3)=(3,9,1)
\]
contains a point for which the Milnor algebra has the minimal free resolution
expected for a plus-one generated curve with defect \(3\).

For \(k=5\) the situation is even more delicate. The numerical restrictions
give
\[
(n_2,t_3,n_3)\in
\{(0,14,4),(1,15,3),(2,16,2),(3,17,1)\}.
\]
If such an arrangement were plus-one generated with defect \(\nu(C)=3\), then
Remark~\ref{rem:5k} gives
\[
(d_1,d_2,d_3)=(5,5,7)
\]
and
\[
\tau(C)=58.
\]
Again, all the weak combinatorics listed above satisfy
\[
n_2+3t_3+4n_3=58.
\]
Thus the Tjurina number alone does not distinguish the possible POG cases from
the non-POG cases.

These observations lead to the following speculative conclusion. The
existence of plus-one generated conic arrangements with defect \(\nu(C)=3\)
for \(k=4\) or \(k=5\) is not ruled out by the present numerical restrictions.
Nevertheless, all currently available symmetric constructions lead either to
curves of type \(2B\) or to nearly free curves, rather than to curves with
exponents
\[
(4,4,6)
\quad\text{or}\quad
(5,5,7).
\]
Therefore, if POG arrangements with defect \(\nu(C)=3\) exist for \(k=4\) or \(k=5\),
\textit{they are likely to occur in less symmetric parts of the corresponding
realization spaces}. In practical terms, one should look for them by imposing
the singularity conditions first, and then studying the rank conditions on the
Jacobian syzygy module.
\end{remark}

Based on Proposition \ref{391} and Remark \ref{rem:speculation-k4-k5} we would like to conclude our section by the following difficult problem.
\begin{problem}
Does there exist an arrangement
\[
C\subset \mathbb P^2_{\mathbb C}
\]
of four smooth conics with only nodes, tacnodes, and ordinary triple points such that \(C\) is plus-one generated with defect
\[
\nu(C)=3?
\]
In particular, for the geometrically realizable weak combinatorics
\[
(n_2,t_3,n_3)=(3,9,1),
\]
does the corresponding realization space contain a point with
\[
\operatorname{mdr}(C)=4
\]
and with minimal Jacobian syzygies of degrees
\[
(4,4,6)?\]
\end{problem}

\section{Conic-line arrangements with only nodes, tacnodes, and ordinary triple intersection points} \label{sec:CL}

We now turn to conic-line arrangements. The point of this section is that B\'ezout's theorem determines the total intersection multiplicity, but it does not determine how the singularities are distributed among the components. Thus the same weak combinatorics may correspond to different incidence patterns. For arrangements consisting of two smooth conics and one line, the presence of an ordinary triple point imposes a particularly strong restriction: once the line passes transversely through the common point of the two conics, it has no remaining intersection multiplicity with either conic that could support a tangency.

\begin{lemma} \label{21121}
Let $C=Q_1\cup Q_2\cup\ell$ be an arrangement of two smooth conics and one line. Assume that $C$ has an ordinary triple point and at least one tacnode. Then every tacnode of $C$ must be formed by $Q_1$ and $Q_2$. In particular, neither $\ell$ and $Q_1$ nor $\ell$ and $Q_2$ can be tangent. Consequently, the weak combinatorics $(k,\ell;n_2,t_3,n_3)=(2,1;1,2,1)$ is not geometrically realizable.
\end{lemma}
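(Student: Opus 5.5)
Since the only singularities allowed are nodes, tacnodes and ordinary triple points, and the arrangement has just three components, an ordinary triple point $P$ must lie on all three of $Q_1,Q_2,\ell$, with pairwise distinct tangent directions there. In particular $\ell$ is not tangent to either conic at $P$, so $I_P(\ell,Q_i)=1$ for $i=1,2$. I would then count the remaining intersection multiplicity away from $P$ using B\'ezout's theorem, as in the proof of Proposition~\ref{prop:oneD4}:
\[
\sum_{Q\in(\ell\cap Q_i)\setminus\{P\}} I_Q(\ell,Q_i)=2-1=1 .
\]
A tacnode formed by $\ell$ and $Q_i$ would need a point $Q\neq P$ with $I_Q(\ell,Q_i)= 2$, since a tacnode between two smooth branches has intersection multiplicity $2$. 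This is impossible. Also, $P$ itself is an ordinary triple point and not a tacnode. Hence $\ell$ is tangent to neither conic, and every tacnode of $C$ lies in $Q_1\cap Q_2$. Tacnodes cannot pass through three components either, because a tacnode is a double point. This proves the first two assertions.

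For the non-realizability of $(2,1;1,2,1)$ I would again use B\'ezout, now for the pair $Q_1,Q_2$. They meet transversally at $P$, so $I_P(Q_1,Q_2)=1$ and the remaining multiplicity away from $P$ is $4-1=3$. By the first part both tacnodes must come from $Q_1\cap Q_2$. Each contributes $2$, giving $4>3$, a contradiction. Equivalently, one can argue as in Proposition~\ref{prop:oneD4} that $t_{12}\le 1$.

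As a consistency check, the total B\'ezout count also fails. The pairwise intersection numbers sum to $4+2+2=8$, while $(n_2,t_3,n_3)=(1,2,1)$ gives $n_2+2t_3+3n_3=1+4+3=8$. The global count is therefore balanced, which shows that the obstruction is local: it comes from how the multiplicity is distributed among pairs of components. The main subtle point is justifying that $\ell$ passes through $P$ transversally to both conics, and this follows directly from the definition of an ordinary triple point (three smooth branches with distinct tangents).
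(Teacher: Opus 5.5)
Your proof is correct and follows essentially the same route as the paper: B\'ezout for each pair $(\ell,Q_i)$, after subtracting $I_P(\ell,Q_i)=1$ at the ordinary triple point, rules out any tangency of the line. B\'ezout for $(Q_1,Q_2)$ then shows that two tacnodes together with the transverse intersection at $P$ would exceed $4$; the paper phrases this as eliminating four of five incidence graphs. One small wording fix: your consistency-check paragraph says the total B\'ezout count ``also fails'' but then correctly shows it balances ($8=8$), so it should say instead that the global count does not detect the obstruction.
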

\begin{proof}
    Let us assume that the arrangement contains at least one ordinary triple point.
    We shall analyze the following five possible graph representations:
    \begin{table}[h!]
    \centering
\renewcommand{\arraystretch}{1.4}
\begin{tabular}{>{\centering\arraybackslash}m{3cm} 
                >{\centering\arraybackslash}m{3cm}
                >{\centering\arraybackslash}m{3cm}
                >{\centering\arraybackslash}m{3cm}
                >{\centering\arraybackslash}m{3cm}}
\begin{tikzpicture}
                \draw (0,0) -- (1,1.2);
                \draw [style=dashed] (1,0) ellipse (1cm and 1.2cm);
                \draw [fill] (0,0) circle (3pt);
                \node at (-0.1,-0.5) {$Q_1$};
                \draw [fill] (2,0) circle (3pt);
                \node at (2.2,-0.5) {$Q_2$};
                \draw [fill=white] (1,1.2) circle (3pt);
                \node at (1.4, 1.25) {$\ell$};
            \end{tikzpicture} 
            &
\begin{tikzpicture}
                \draw (0,0) -- (2,0);
                \draw [style=dashed] (1,0) ellipse (1cm and 1.2cm);
                \draw [fill] (0,0) circle (3pt);
                \node at (-0.1,-0.5) {$Q_1$};
                \draw [fill] (2,0) circle (3pt);
                \node at (2.2,-0.5) {$Q_2$};
                \draw [fill=white] (1,1.2) circle (3pt);
                \node at (1.4, 1.25) {$\ell$};
            \end{tikzpicture} &
\begin{tikzpicture}
                \draw (0,0) -- (1,1.2) -- (2,0);
                \draw [style=dashed] (1,0) ellipse (1cm and 1.2cm);
                \draw [fill] (0,0) circle (3pt);
                \node at (-0.1,-0.5) {$Q_1$};
                \draw [fill] (2,0) circle (3pt);
                \node at (2.2,-0.5) {$Q_2$};
                \draw [fill=white] (1,1.2) circle (3pt);
                \node at (1.4, 1.25) {$\ell$};
            \end{tikzpicture} &
            \begin{tikzpicture}
                \draw (2,0) -- (0,0) -- (1,1.2);
                \draw [style=dashed] (1,0) ellipse (1cm and 1.2cm);
                \draw [fill] (0,0) circle (3pt);
                \node at (-0.1,-0.5) {$Q_1$};
                \draw [fill] (2,0) circle (3pt);
                \node at (2.2,-0.5) {$Q_2$};
                \draw [fill=white] (1,1.2) circle (3pt);
                \node at (1.4, 1.25) {$\ell$};
            \end{tikzpicture} &
            \begin{tikzpicture}
                \draw [style={double,double distance=2pt}] (0,0) -- (2,0);
                \draw [style=dashed] (1,0) ellipse (1cm and 1.2cm);
                \draw [fill] (0,0) circle (3pt);
                \node at (-0.1,-0.5) {$Q_1$};
                \draw [fill] (2,0) circle (3pt);
                \node at (2.2,-0.5) {$Q_2$};
                \draw [fill=white] (1,1.2) circle (3pt);
                \node at (1.4, 1.25) {$\ell$};
            \end{tikzpicture}
\end{tabular}
\end{table}

\noindent
where 
\begin{itemize}
    \item an open point labeled $\ell$ denotes a line,
    \item a solid point labeled $Q_i$ represents a conic,
    \item a solid edge between vertices indicates a tacnode at the intersection of the corresponding two curves,
    \item a dashed connection indicates an ordinary triple at the intersection of the corresponding three curves,
    \item the absence of an edge between two vertices indicates that the curves intersect at nodes, according to B\'ezout's theorem.
\end{itemize}
Let $O$ be the ordinary triple point. Since the three local branches meet transversely at $O$, we have
\[
I_O(\ell,Q_i)=1\qquad (i=1,2).
\]
On the other hand, B\'ezout's theorem gives
\[
\sum_{P\in \ell\cap Q_i} I_P(\ell,Q_i)=\deg(\ell)\deg(Q_i)=2.
\]
Thus, after the contribution at $O$, only one unit of intersection multiplicity remains for the pair $(\ell,Q_i)$. A tangency between $\ell$ and $Q_i$ away from $O$ would contribute multiplicity $2$, and a tangency at $O$ would contradict the assumption that $O$ is ordinary. Hence the line cannot be tangent to either conic. This eliminates every graph in which the open vertex is joined to a filled vertex by a solid edge, namely the first, third, and fourth graphs.

The fifth graph has two tacnodes between $Q_1$ and $Q_2$. Each of them contributes intersection multiplicity $2$, so the two tacnodes already account for the full B\'ezout number
\[
\deg(Q_1)\deg(Q_2)=4.
\]
There is then no remaining intersection multiplicity for an additional transverse intersection of $Q_1$ and $Q_2$ at the ordinary triple point. Therefore the fifth graph is also impossible.

\noindent
\begin{minipage}[t]{0.6\textwidth}
\vspace{0pt}
\ \ \ \ The only remaining graph is the second one: the tacnode is formed by the two conics, while the line meets both conics transversely at the ordinary triple point. Substituting $t_3=n_3=1$ into B\'ezout's relation $8=n_2+2t_3+3n_3$ gives $n_2=3$. The following equations give a realization of this admissible case; the picture is shown in Figure~\ref{fig:C_curve}.
$$\ell: \ x+3y=0; \hspace{0.3cm} Q_1: \ x^2+y^2-yz-2z^2=0; \hspace{0.3cm} Q_2: \ x^2+3y^2-3z^2=0.$$
This completes the proof.
\end{minipage}
\hfill
\begin{minipage}[t]{0.4\textwidth}
\vspace{0pt}
\centering
\begin{tikzpicture}[scale=1]
    \draw [line width=1.2pt, rotate=0] (0,0.5) ellipse (1cm and 1.5cm); 
    \node at (1.3, 1.6) {$Q_1$};
                \draw [line width=1.2pt, rotate=0] (0,0) ellipse (2cm and 1cm);
                \node at (2.4,0.3) {$Q_2$};
                \draw [line width=1.2pt] (-1.7,1.5) -- (1.6,-1.2);
                \node at (-2,1.5) {$\ell$};
            \end{tikzpicture}
            \captionof{figure}{$\mathcal{K}(C)=(2,1;3,1,1)$.}
\label{fig:C_curve}
\end{minipage}
\end{proof}

\begin{remark}
    If $t_3=1$ and $n_3=2$, then the B\'ezout relation gives $n_2=0$, hence $\mathcal{K}(C)=(2,1;0,1,2)$. This arrangement is obtained from the same two conics as in Figure~\ref{fig:C_curve}, but by choosing the line $\ell:2y-z=0$.
\end{remark}

\section{Weak and strong Ziegler pairs of arrangements of conics and lines} \label{Ziegler}

In this section we list the weak and strong Ziegler pairs found in the database of conic and conic-line arrangements of degree at most $6$ recorded in the \hyperref[sec:data]{Appendix}. We first fix the terminology.

\begin{definition}
Let $C_1,C_2\subset\mathbb P^2_{\mathbb C}$ be reduced curves. We say that $C_1$ and $C_2$ form a \textbf{weak Ziegler pair} if
\[
\mathcal K(C_1)=\mathcal K(C_2),
\]
but the Milnor algebras $M(C_1)$ and $M(C_2)$ have different minimal graded free resolutions. Equivalently, in the examples considered below, the Jacobian syzygy modules ${\rm AR}(C_1)$ and ${\rm AR}(C_2)$ are not isomorphic as graded $S$-modules.
\end{definition}

\begin{definition}
We say that \((C_1,C_2)\) is a \textbf{strong Ziegler pair} if \(C_1\) and
\(C_2\) have the same strong combinatorial type in the sense of
Definition \ref{iso}, but the corresponding Jacobian syzygy modules are not
isomorphic, or equivalently the Milnor algebras have different minimal
graded free resolutions.
\end{definition}

\begin{remark}
    We say that three reduced curves \(C_1,C_2, C_3\subset\mathbb P^2_{\mathbb{C}}\) form a {\bf weak Ziegler triple} if every pair \[(C_1,C_2), \qquad (C_1,C_3), \qquad (C_2,C_3)\] forms a weak Ziegler pair.
\end{remark}

We begin with the weak Ziegler pairs. The relevant weak combinatorics are:
\begin{enumerate}
    \item $\mathcal{K}(\mathcal{C}_1)=(2,1;4,2,0)$,
    \item $\mathcal{K}(\mathcal{C}_2)=(1,3;6,0,1)$,
    \item $\mathcal{K}(\mathcal{C}_3)=(3,0;6,3,0)$,
    \item $\mathcal{K}(\mathcal{C}_4)=(3,0;4,4,0)$,
    \item $\mathcal{K}(\mathcal{C}_5)=(1,4;11,0,1)$,
    \item $\mathcal{K}(\mathcal{C}_6)=(2,2;7,0,2)$,
    \item $\mathcal{K}(\mathcal{C}_7)=(2,2;9,2,0)$,
\end{enumerate}
where $\mathcal K(\mathcal C)=(k,\ell;n_2,t_3,n_3)$, $k$ denotes the number of conics and $\ell$ denotes the number of lines. The database in \hyperref[sec:data]{Appendix} gives the following coincidences of weak combinatorics.

\begin{proposition}\label{prop:weak-same-comb}
The following identities between weak combinatorics hold:
    \begin{enumerate}[label=\normalfont{\arabic*)}]
        \item $\mathcal{K}\left(W_{4,1}^{2,1}\right)=\mathcal{K}\left(W_{4,2}^{2,1}\right)=\mathcal{K}(\mathcal{C}_1)$,
        \item $\mathcal{K}\left(W_{3,1}^{1,3}\right)=\mathcal{K}\left(W_{3,2}^{1,3}\right)=\mathcal{K}(\mathcal{C}_2)$,
        \item $\mathcal{K}\left(W_{7,1}^{3,0}\right)=\mathcal{K}\left(W_{7,2}^{3,0}\right)=\mathcal{K}(\mathcal{C}_3)$,
        \item $\mathcal{K}\left(W_{14,1}^{3,0}\right)=\mathcal{K}\left(W_{14,2}^{3,0}\right)=\mathcal{K}(\mathcal{C}_4)$,
        \item $\mathcal{K}\left(W_{3,1}^{1,4}\right)=\mathcal{K}\left(W_{3,2}^{1,4}\right)=\mathcal{K}(\mathcal{C}_5)$,
        \item $\mathcal{K}\left(W_{4,1}^{2,2}\right)=\mathcal{K}\left(W_{4,2}^{2,2}\right)=\mathcal{K}\left(W_{4,3}^{2,2}\right)=\mathcal{K}(\mathcal{C}_6)$,
        \item $\mathcal{K}\left(W_{5,1}^{2,2}\right)=\mathcal{K}\left(W_{5,2}^{2,2}\right)=\mathcal{K}\left(W_{5,3}^{2,2}\right)=\mathcal{K}(\mathcal{C}_7)$.
    \end{enumerate}
\end{proposition}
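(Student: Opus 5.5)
The plan is to verify each of the seven identities directly from the explicit equations recorded in the Appendix. Since the weak combinatorics $\mathcal K(C)=(k,\ell;n_2,t_3,n_3)$ consists only of the numbers of conics and lines and the counts of nodes, tacnodes and ordinary triple points, it suffices, for each listed arrangement $W^{k,\ell}_{i,j}$, to confirm the components and determine its singular locus together with the local type of every singular point. The number of components of each degree can be read off from the factorization of the defining polynomial. The proof is therefore a finite, case-by-case verification, and it is best organized pair by pair of components.

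For each pair of components $(A,B)$, I will compute $A\cap B$. When one of the two is a conic, I will use a rational parametrization of it, as in the proof of Proposition~\ref{391}. When both are lines, there is a single point. Substituting the parametrization of one curve into the equation of the other gives a binary form of degree $\deg A\cdot\deg B$. Its roots give the intersection points, and their multiplicities give $I_P(A,B)$: a simple root marks a transversal point and a double root marks a tangency. Next I will collect the points lying on three components and check that the three tangent directions there are distinct, exactly as was done for $P$ in Proposition~\ref{391}. I will also check that no point lies on four components. A point lying on exactly two components with $I_P=1$ is a node, one with $I_P=2$ is a tacnode, and a point lying on three components with pairwise transversal branches is an ordinary triple point.

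As a consistency check I will use B\'ezout's relation in conic-line form,
\[
n_2+2t_3+3n_3=4\binom{k}{2}+2k\ell+\binom{\ell}{2},
\]
which ensures that no intersection has been missed. For example, for $\mathcal C_6=(2,2;7,0,2)$ both sides equal $13$, and for $\mathcal C_7=(2,2;9,2,0)$ both sides equal $13$ as well. Once the triple $(n_2,t_3,n_3)$ and the pair $(k,\ell)$ agree with the stated $\mathcal K(\mathcal C_i)$ for every member of a group, the corresponding identity follows. In practice I would run this in \texttt{SINGULAR}, computing the singular locus via the ideal generated by $f$ and $J_f$ and using the local Tjurina numbers $1,3,4$ to classify points. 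Because the Tjurina number alone does not separate the three types, I will combine it with the count of components through each point.

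I expect the main obstacle to be bookkeeping rather than any conceptual difficulty. The delicate step is distinguishing an ordinary triple point from a point where two of the branches are also tangent, and making sure that no accidental extra coincidence, such as a further triple point or a higher tangency, appears in any representative. I would handle this by explicitly checking the tangent directions at every point that lies on three components.
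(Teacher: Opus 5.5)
Your proposal is correct and takes essentially the same route as the paper: the paper's proof is also a direct inspection of the singular loci of the explicit arrangements in the Appendix, taking care to count the complex nodes and nodes at infinity that do not show up in the real pictures. Your pairwise computation, which uses binary forms in a parametrization of each conic, picks up exactly those points automatically, and your B\'ezout check $n_2+2t_3+3n_3=4\binom{k}{2}+2k\ell+\binom{\ell}{2}$ is a sensible safeguard that the paper leaves implicit.
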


\begin{proof}
The equalities are obtained by inspecting the singular points of the arrangements listed in the database. The figures below give real pictures of the corresponding configurations. Some nodes are not visible in the real affine chart: in Figure~\ref{fig:4} and Figure~\ref{fig:7} the line at infinity and the conic meet in two complex nodes; in the second arrangement in Figure~\ref{fig:5}, each pair of conics has two complex nodes and one real tacnode; and in the second arrangement in Figure~\ref{fig:W30-14}, only one pair of conics contributes two complex nodes. Taking these complex intersection points into account gives exactly the weak combinatorics stated above.
\end{proof}

\tikzset{
  curve/.style={black, very thick},
}
\tikzset{
  infinity/.style={black, very thick},
}

\begin{figure}[ht!]
\begin{minipage}[t]{0.5\textwidth}
\centering
\begin{tikzpicture}[scale=1]
\draw[curve] (-2,0.5) -- (2,0.5);
\draw[curve, domain=-1.4:1.4, samples=200]
  plot ({cosh(\x)}, {sinh(\x)});
\draw[curve, domain=-1.4:1.4, samples=200]
  plot ({-cosh(\x)}, {sinh(\x)});
\draw[curve] (0,0) circle (1);
\end{tikzpicture}
\end{minipage}
\begin{minipage}[t]{0.5\textwidth}
\centering
\begin{tikzpicture}[scale=1]
\draw[curve] (-3,-1) -- (1,3);
\draw[curve] (0,0) ellipse (1.732 and 1);
\draw[curve] (0,0) ellipse (1 and 1.732);
\end{tikzpicture}
\end{minipage}
\caption{Geometric realizations of arrangements of combinatorial type $W^{2,1}_{4,1}$ and $W^{2,1}_{4,2}$.}
\end{figure}

\begin{figure}[ht!]
\begin{minipage}[t]{0.5\textwidth}
\centering
\begin{tikzpicture}[scale=1]
\draw[curve] (-1,2) -- (2,-1);
\draw[curve, dashed] (-2,3) -- (-1,2);
\draw[curve] (-2,-1) -- (1,2);
\draw[curve, dashed] (1,2) -- (2,3);
\draw[curve] (0,0) circle (1);
  \draw[infinity] (3,0) arc[start angle=0, end angle=130, radius=3];
\node at (3,-0.4) {$\infty$};
\end{tikzpicture}
\end{minipage}
\begin{minipage}[t]{0.5\textwidth}
\centering
\begin{tikzpicture}[scale=1]
\draw[curve] (0,-2) -- (0,2);
\draw[curve] (-2,0) -- (2,0);
\draw[curve] (-1.5,1.5) -- (1.5,-1.5);
\draw[curve] (0,0) circle (1.3);
\end{tikzpicture}
\end{minipage}
\caption{Geometric realizations of arrangements of combinatorial type $W^{1,3}_{3,1}$ and $W^{1,3}_{3,2}$.}
\label{fig:4}
\end{figure}

\begin{figure}[ht!]
\begin{minipage}[t]{0.5\textwidth}
\centering
\begin{tikzpicture}[scale=1]
\draw[curve] (0,0) ellipse (1 and 2);
\draw[curve, domain=-1.4:1.4, samples=200]
  plot ({cosh(\x)}, {sinh(\x)});
\draw[curve, domain=-1.4:1.4, samples=200]
  plot ({-cosh(\x)}, {sinh(\x)});
\draw[curve] (0,0.5) circle (1.5);
\end{tikzpicture}
\end{minipage}
\begin{minipage}[t]{0.5\textwidth}
\centering
\begin{tikzpicture}[scale=1]
\draw[curve] (1,0) circle (1);
\draw[curve] (-1,0) circle (1);
\draw[curve, domain=-0.9:0.9, samples=200]
  plot ({2*cosh(\x)}, {2*sinh(\x)});
\draw[curve, domain=-0.9:0.9, samples=200]
  plot ({-2*cosh(\x)}, {2*sinh(\x)});
\end{tikzpicture}
\end{minipage}
\caption{Geometric realizations of arrangements of combinatorial type $W^{3,0}_{7,1}$ and $W^{3,0}_{7,2}$.}
\label{fig:5}
\end{figure}

\begin{figure}[ht!]
\begin{minipage}[t]{0.5\textwidth}
\centering
\begin{tikzpicture}[scale=1]
\draw[curve] (0,0) ellipse (1 and 2);
\draw[curve] (0,0) circle (1);
\draw[curve] (0,0) ellipse (2 and 1);
\end{tikzpicture}
\end{minipage}
\begin{minipage}[t]{0.5\textwidth}
\centering
\begin{tikzpicture}[scale=1]
\draw[curve] (0,0) circle (1);
\draw[curve] (0,0) ellipse (2 and 1);
\draw[curve, domain=-1:1, samples=200]
  plot ({-0.5+1.5*cosh(\x)}, {1.5*sinh(\x)});
\draw[curve, domain=-1:1, samples=200]
  plot ({-0.5-1.5*cosh(\x)}, {1.5*sinh(\x)});
\end{tikzpicture}
\end{minipage}
\caption{Geometric realizations of arrangements of combinatorial type $W^{3,0}_{14,1}$ and $W^{3,0}_{14,2}$.}
\label{fig:W30-14}
\end{figure}

\begin{figure}[ht!]
\begin{minipage}[t]{0.5\textwidth}
\centering
\begin{tikzpicture}[scale=1]
\draw[curve] (0,-2) -- (0,2);
\draw[curve, dashed] (0,2) -- (0,3.2);
\draw[curve] (-2,0) -- (2,0);
\draw[curve, dashed] (2,0) -- (3.1,0);
\draw[curve] (-1.5,1.5) -- (1.5,-1.5);
\draw[curve, dashed] (-2.4,2.4) -- (-1.5,1.5);
\draw[curve] (0,0) circle (1.3);
\draw[infinity] (2.5,-1) arc[start angle=-20, end angle=140, radius=2.8];
\node at (2.4,-1.3) {$\infty$};
\end{tikzpicture}
\end{minipage}
\begin{minipage}[t]{0.5\textwidth}
\centering
\begin{tikzpicture}[scale=1.2]
\draw[curve] (-1.5,1.5) -- (1.5,-1.5);
\draw[curve] (-1.5,-1.5) -- (1.5,1.5);
\draw[curve] (-1,2) -- (2,-1);
\draw[curve] (-2,-1) -- (1,2);
\draw[curve] (0,0) circle (1);
\end{tikzpicture}
\end{minipage}
\caption{Geometric realizations of arrangements of combinatorial type $W^{1,4}_{3,1}$ and $W^{1,4}_{3,2}$.}
\label{fig:7}
\end{figure}

\begin{figure}[ht!]
\begin{minipage}[t]{0.33\textwidth}
\centering
\begin{tikzpicture}[scale=0.9]
\draw[curve] (-2.35,-2.35) -- (2,2);
\draw[curve] (-2.5,0) -- (2.5,0);
\draw[curve] (0,0) ellipse (2 and 1.155);
\draw[curve] (0,0) ellipse (1.155 and 2);
\end{tikzpicture}
\end{minipage}
\begin{minipage}[t]{0.33\textwidth}
\centering
\begin{tikzpicture}[scale=0.9]
\draw[curve] (-2.5,0) -- (2.5,0);
\draw[curve] (-1,-2.4) -- (-1,2.2);
\draw[curve] (0,0) ellipse (2 and 1.155);
\draw[curve] (0,0) ellipse (1.155 and 2);
\end{tikzpicture}
\end{minipage}
\begin{minipage}[t]{0.33\textwidth}
\centering
\begin{tikzpicture}[scale=0.9]
\draw[curve] (-3,0) -- (2.7,0);
\draw[curve] (-3,1) -- (0.5,-2.5);
\draw[curve] (0,0) ellipse (2 and 1.155);
\draw[curve] (0,0) ellipse (1.155 and 2);
\end{tikzpicture}
\end{minipage}
\caption{Geometric realizations of arrangements of combinatorial type $W^{2,2}_{4,1}$, $W^{2,2}_{4,2}$ and $W^{2,2}_{4,3}$.}
\label{fig:8}
\end{figure}

\begin{figure}[ht!]
\begin{minipage}[t]{0.33\textwidth}
\centering
\begin{tikzpicture}[scale=0.9]
\draw[curve] (-2.5,-0.5) -- (1.5,3.5);
\draw[curve] (1,-2) -- (1,3.7); 
\draw[curve] (0,0) ellipse (1 and 2);
\draw[curve] (0,0) ellipse (1.732 and 1);
\end{tikzpicture}
\end{minipage}
\begin{minipage}[t]{0.33\textwidth}
\centering
\begin{tikzpicture}[scale=0.93]
\draw[curve] (-3.8,-1.8) -- (1,3);
\draw[curve] (-3.7,-1) -- (2.2,-1);
\draw[curve] (0,0) ellipse (1 and 2);
\draw[curve] (0,0) ellipse (1.732 and 1);
\end{tikzpicture}
\end{minipage}
\begin{minipage}[t]{0.33\textwidth}
\centering
\begin{tikzpicture}[scale=0.9]
\draw[curve] (-2.5,0) -- (2.5,0);
\draw[curve] (-2,2) -- (2,-2);
\draw[curve] (0,0) circle (1);
\draw[curve] (0,0) ellipse (2 and 1);
\end{tikzpicture}
\end{minipage}
\caption{Geometric realizations of arrangements of combinatorial type $W^{2,2}_{5,1}$, $W^{2,2}_{5,2}$ and $W^{2,2}_{5,3}$.}
\label{fig:W22-5}
\end{figure}

\newpage
\begin{remark}
    It is noteworthy that in Figure~\ref{fig:8}, the first two curves not only have the same weak combinatorics, but they have the same strong combinatorial type, discussed later in this section.
\end{remark}
Thus the arrangements in each listed pair have the same weak combinatorics. In most cases they differ at the level of the stronger incidence vector $\mathbf r$ from Definition~\ref{def:StrongComb}. We now compare their homological data.

\begin{proposition}\label{prop:weak-ziegler-pairs}
The pairs (resp. triples) of arrangements listed in Proposition~\ref{prop:weak-same-comb} form weak Ziegler pairs (resp. triples).
\end{proposition}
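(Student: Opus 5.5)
Since Proposition~\ref{prop:weak-same-comb} already shows that the arrangements in each group have the same weak combinatorics, what remains is to show that the Milnor algebras within each group have pairwise different minimal graded free resolutions. I will do this one group at a time, using the explicit equations recorded in the Appendix for $W^{2,1}_{4,i}$, $W^{1,3}_{3,i}$, $W^{3,0}_{7,i}$, $W^{3,0}_{14,i}$, $W^{1,4}_{3,i}$, $W^{2,2}_{4,i}$ and $W^{2,2}_{5,i}$. For each defining polynomial $f$ I compute in \texttt{SINGULAR} a minimal graded free resolution of $M(f)=S/J_f$ and read off the number $s$ of minimal syzygies, the exponents $(d_1,\ldots,d_s)$, and the second syzygy shifts $e_j$. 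Two arrangements with the same $\mathcal K$ then form a weak Ziegler pair as soon as some graded Betti number differs. For the two triples this has to be verified for all three pairs.

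To keep the computational part short, I will separate the members of a group by invariants that can be checked independently where possible. The first is $d_1={\rm mdr}(f)$, which is the least $r$ with ${\rm AR}(f)_r\neq0$. One can compute it by a linear algebra rank test: find the smallest $r$ for which the map $S_r^3\to S_{r+d-1}$, $(a,b,c)\mapsto a\partial_xf+b\partial_yf+c\partial_zf$, fails to be injective. The second invariant is the type $t(C)=d_1+d_2+1-d$, together with the form (free, POG, $2A$, $2B$, \dots) from Propositions~\ref{prop:typ2} and~\ref{prop:typ3}.

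The constraint from Proposition~\ref{Prop:Tau_POG} and Remark~\ref{rem:epsilon} helps here. All members of a group have the same $\tau(C)$, since $\tau$ depends only on the weak combinatorics. Now $\tau$ determines $d_1$ for free curves, and for POG curves it fixes $d_1(d-d_1-1)+\nu$. So when one member is free (or nearly free) and another has the same $\tau$ but a different $d_1$, the two resolutions are automatically distinct. In particular, a change in $d_1$ alone already proves non-isomorphism of ${\rm AR}$. I expect this to handle most pairs: the symmetric members of each group (for instance the first picture in Figure~\ref{fig:W30-14}, which is exactly the defect-$3$ POG example $(3,3,5)$ from the previous theorem) have one type, and the less symmetric members have another. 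For pairs where $d_1$ coincides, I will compare the full exponent tuples and the number $s$ of generators instead.

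The main obstacle is the pairs that share $d_1$ and $s$, and possibly the type. This is most likely for the triples $W^{2,2}_{4,i}$ and $W^{2,2}_{5,i}$, and in particular for the first two curves of Figure~\ref{fig:8}, which even have the same strong combinatorial type. For these I will compare the remaining Betti numbers $d_2,\ldots,d_s$ and $e_j$, equivalently the $\varepsilon_j$. If those also agree, I will compare the Hilbert functions of $M(f)$ in low degrees, because any difference there forces different resolutions. The result is a table listing the exponents for each member, and the proof reduces to checking, for each pair, at least one entry where the tables differ.
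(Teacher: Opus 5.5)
Your proposal is correct and is essentially the paper's proof, which simply records the \texttt{SINGULAR}-computed minimal graded free resolutions of all these Milnor algebras and observes that they differ within each pair or triple. In fact $d_1$ alone separates every pair except three. These are $W^{3,0}_{14,1},W^{3,0}_{14,2}$ (exponents $(3,3,5)$ vs.\ $(3,4,4,4)$), and $W^{2,2}_{4,1},W^{2,2}_{4,2}$ resp.\ $W^{2,2}_{5,2},W^{2,2}_{5,3}$ (exponents $(3,4,4)$ vs.\ $(3,4,4,5)$), where instead the number $s$ of minimal syzygies differs. So your Hilbert-function fallback is never needed, and it could not help anyway, since identical graded Betti numbers force identical Hilbert functions.
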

\newpage
\begin{proof}
For each arrangement we compute the minimal graded free resolution of its Milnor algebra in \texttt{SINGULAR}. The results are as follows:

\begin{itemize}[leftmargin=3.5cm, labelsep=0.5cm]
    \item[$\left(W^{2,1}_{4,1}\right)$:] \quad $0\r S(-9)\r S(-8)\oplus S(-7)\oplus S(-6)\r S^3(-4)\r S$,
\item[$\left(W^{2,1}_{4,2}\right)$:] \quad $0\r S^2(-8)\r S^4(-7)\r S^3(-4)\r S$,
    \item[$\left(W^{1,3}_{3,1}\right)$:]
    \quad $0\r S^2(-8)\r S^4(-7)\r S^3(-4)\r S$,
\item[$\left(W^{1,3}_{3,2}\right)$:]
\quad $0\r S(-9)\r S(-8)\oplus S(-7)\oplus S(-6)\r S^3(-4)\r S$,
    \item[$\left(W^{3,0}_{7,1}\right)$:]
    \quad $0\r S(-11)\r S^2(-9)\oplus S(-8)\r S^3(-5) \r S$,
\item[$\left(W^{3,0}_{7,2}\right)$:]
\quad $0\r S^3(-10)\r S^5(-9)\r S^3(-5)\r S$, 
\item[$\left(W^{3,0}_{14,1}\right)$:]
\quad $0\r S(-11)\r S(-10)\oplus S^2(-8)\r S^3(-5)\r S$,
\item[$\left(W^{3,0}_{14,2}\right)$:]
\quad $0\r S^2(-10)\r S^3(-9)\oplus S(-8)\r S^3(-5)\r S$,
\item[$\left(W^{1,4}_{3,1}\right)$:]
\quad $0\r S(-11)\r S^2(-9)\oplus S(-8)\r S^3(-5)\r S$,
\item[$\left(W^{1,4}_{3,2}\right)$:]
\quad $0\r S^3(-10)\r S^5(-9)\r S^3(-5)\r S$, 
\item[$\left(W^{2,2}_{4,1}\right)$:]
\quad $0\r S(-11)\r S^2(-9)\oplus S(-8)\r S^3(-5)\r S$,
\item[$\left(W^{2,2}_{4,2}\right)$:]
\quad $0\r S(-11)\oplus S(-10)\r S(-10)\oplus S^2(-9)\oplus S(-8)\r S^3(-5)\r S$,
\item[$\left(W^{2,2}_{4,3}\right)$:]
\quad $0\r S^3(-10)\r S^5(-9)\r S^3(-5)\r S$, 
\item[$\left(W^{2,2}_{5,1}\right)$:]
\quad $0\r S^3(-10)\r S^5(-9)\r S^3(-5)\r S$,
\item[$\left(W^{2,2}_{5,2}\right)$:]
\quad $0\r S(-11)\r S^2(-9)\oplus S(-8)\r S^3(-5)\r S$,
\item[$\left(W^{2,2}_{5,3}\right)$:]
\quad $0\r S(-11)\oplus S(-10)\r S(-10)\oplus S^2(-9)\oplus S(-8)\r S^3(-5)\r S$.

\end{itemize}

\noindent
    Since each listed pair (resp. triple) has the same weak combinatorics but different minimal graded free resolutions of the corresponding Milnor algebras, each such pair forms a weak Ziegler pair (resp. triple).
\end{proof}

We now pass to strong Ziegler pairs. Here equality of weak combinatorics is not enough; we must also
check that the incidence data encoded by \(W\) agree. The relevant weak combinatorics are:
\[
\mathcal{K}(H)=(3,0;6,0,2),
\]
\[
\mathcal{K}(I)=(3,0;4,1,2),
\]
and
\[
\mathcal{K}(J)=(2,2;7,0,2).
\]
The arrangements
\[
W^{3,0}_{5,1},\quad W^{3,0}_{5,2}
\]
have weak combinatorics \(\mathcal{K}(H)\), the arrangements
\[
W^{3,0}_{8,1},\quad W^{3,0}_{8,2}
\]
have weak combinatorics \(\mathcal{K}(I)\), while the arrangements
\[
W^{2,2}_{4,1},\quad W^{2,2}_{4,2}
\]
have weak combinatorics \(\mathcal{K}(J)\).

\begin{proposition}\label{prop:strong-same-weak}
The pairs
\[
W^{3,0}_{5,1},\quad W^{3,0}_{5,2},
\]
\[
W^{3,0}_{8,1},\quad W^{3,0}_{8,2},
\]
and
\[
W^{2,2}_{4,1},\quad W^{2,2}_{4,2}
\]
have the same weak combinatorics pairwise.
\end{proposition}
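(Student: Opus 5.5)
The statement is purely combinatorial: for each of the three pairs we must show that the two arrangements have the same vector $(k,\ell;n_2,t_3,n_3)$. I would therefore argue exactly as in the proof of Proposition~\ref{prop:weak-same-comb}. For each arrangement in the Appendix I would take its defining equations and classify every singular point by type. B\'ezout's theorem then serves as a consistency check on the resulting counts.

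The steps, in order, are as follows.

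\emph{Step 1: list the pairwise intersections.} For each pair of components I compute the intersection scheme. When one component is a smooth conic, I use a rational parametrization of it, as in Proposition~\ref{391}. Substituting the parametrization into the other component gives a binary form, of degree $4$ for a conic and degree $2$ for a line. A simple root gives a transverse crossing; a double root gives intersection multiplicity $2$, i.e.\ a tacnode.

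\emph{Step 2: detect coincident points.} I determine which intersection points of different pairs coincide. A point lying on three components is a candidate triple point. It is an ordinary triple point precisely when the three tangent directions there are pairwise distinct, which I check from the linear parts, as for the point $P$ in Proposition~\ref{391}.

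\emph{Step 3: count and check.} From Steps 1 and 2 I read off $n_2$, $t_3$ and $n_3$. As a check I verify the B\'ezout relation
\[
n_2+2t_3+3n_3=\sum_{i<j}\deg C_i\deg C_j .
\]
This sum equals $12$ for three conics and $13$ for two conics and two lines.

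\emph{Step 4: compare with the target data.} For $W^{3,0}_{5,1}$ and $W^{3,0}_{5,2}$ the count should give $(6,0,2)$, and $6+0+6=12$. For $W^{3,0}_{8,1}$ and $W^{3,0}_{8,2}$ it should give $(4,1,2)$, and $4+2+6=12$. For $W^{2,2}_{4,1}$ and $W^{2,2}_{4,2}$ it should give $(7,0,2)$, and $7+0+6=13$. The last pair is already covered by item~6 of Proposition~\ref{prop:weak-same-comb}, so it requires no new work.

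The main obstacle is the complete accounting of points that are invisible in the real affine picture, namely complex points and points at infinity. I would handle this by working with homogeneous coordinates throughout and verifying that the local multiplicities found for each pair sum exactly to its B\'ezout number. This step also guards against confusing a triple point with three nearby nodes: a coincident point is counted once with contribution $3$, and the B\'ezout sum then balances only if the identification is correct.
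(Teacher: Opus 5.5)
Your proposal is correct and follows the paper's route. The paper also obtains the first two pairs by directly counting nodes, tacnodes and ordinary triple points from the displayed defining equations, and it settles the pair $W^{2,2}_{4,1},W^{2,2}_{4,2}$ by reading off $(2,2;7,0,2)$ from the Appendix, which is what item 6 of Proposition~\ref{prop:weak-same-comb} records.

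One small correction: the B\'ezout relation cannot catch a misidentified triple point. An ordinary triple point and three separate nodes both contribute $3$ to $n_2+2t_3+3n_3$, and likewise a tacnode and two nodes both contribute $2$. So the check only certifies that no complex or infinite intersection points were missed, and detecting coincident points must rest on your Step 2.
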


\begin{proof}
For the first two pairs this follows from the direct count of nodes, tacnodes, and ordinary triple points in the displayed equations.

        \begin{alignat*}{2}
        \left(W^{3,0}_{5,1}\right)\!:\;&\quad (x^2+2y^2-3z^2)(x^2+y^2-2z^2)(x^2+yz-2z^2)=0, \\
\left(W^{3,0}_{5,2}\right)\!:\;&\quad (x^2+2y^2-3z^2)(x^2+y^2-2z^2)(x^2+2xz+2yz-z^2)=0, \\
\left(W^{3,0}_{8,1}\right)\!:\;&\quad (x^2-y^2-z^2)(x^2+y^2+xz-2z^2)(x^2+3y^2-6z^2)=0,\\
\left(W^{3,0}_{8,2}\right)\!:\;&\quad (x^2+3y^2-3z^2)(6x^2+6y^2-2\sqrt{3}xz+6\sqrt{3}yz-15z^2)(x^2-y^2-2z^2)=0.
\end{alignat*}

\noindent
For the third pair, the Appendix gives
\[
W^{2,2}_{4,1}=(2,2;7,0,2),
\qquad
W^{2,2}_{4,2}=(2,2;7,0,2).
\]
Thus
\[
\mathcal{K}\left(W^{2,2}_{4,1}\right)=K\left(W^{2,2}_{4,2}\right)=(2,2;7,0,2).
\]
This proves the assertion.
\end{proof}

\tikzset{
  curve/.style={black, very thick},
}
\tikzset{
  infinity/.style={black, very thick},
}

\begin{figure}[h!]
\begin{minipage}[t]{0.5\textwidth}
\centering
\begin{tikzpicture}[scale=1]
\draw[curve] (0,0) ellipse (1.732 and 1.224);
\draw[curve] (0,0) circle (1.414);
\draw[curve, domain=-2:2, smooth] plot (\x,{2-\x*\x});
\end{tikzpicture}
\end{minipage}
\begin{minipage}[t]{0.5\textwidth}
\centering
\begin{tikzpicture}[scale=1]
\draw[curve] (0,0) ellipse (1.732 and 1.224);
\draw[curve] (0,0) circle (1.414);
\draw[curve, domain=-2.9:1.4, smooth] plot (\x,{(1-\x*\x-2*\x)/2});
\end{tikzpicture}
\end{minipage}
\caption{Geometric realizations of arrangements of combinatorial type $W^{3,0}_{5,1}$ and $W^{3,0}_{5,2}$.}
\end{figure}

\begin{figure}[h!]
\begin{minipage}[t]{0.5\textwidth}
\centering
\begin{tikzpicture}[scale=1]
        \draw[curve, domain=-1.3:1.3] plot ({cosh(\x)}, {sinh(\x)});
        \draw[curve, domain=-1.3:1.3] plot ({-cosh(\x)}, {sinh(\x)});
        \draw[curve] (-0.5,0) circle (1.5);
        \draw[curve] (0,0) ellipse ({sqrt(6)} and {sqrt(2)});
\end{tikzpicture}
\end{minipage}
\begin{minipage}[t]{0.5\textwidth}
\centering
\begin{tikzpicture}[scale=0.9]
        \draw[curve] (0,0) ellipse ({sqrt(3)} and 1);
        \draw[curve] ({sqrt(3)/6}, {-sqrt(3)/2}) circle ({sqrt(10/3)});
        \draw[curve, domain=-1.2:0.9, samples=100] plot ({sqrt(2)*cosh(\x)}, {sqrt(2)*sinh(\x)});
        \draw[curve, domain=-1.2:0.9, samples=100] plot ({-sqrt(2)*cosh(\x)}, {sqrt(2)*sinh(\x)});
\end{tikzpicture}
\end{minipage}
\caption{Geometric realizations of arrangements of combinatorial type $W^{3,0}_{8,1}$ and $W^{3,0}_{8,2}$.}
\end{figure}

\newpage
\begin{proposition}\label{prop:strong-ziegler-pairs}
The pairs \[W^{3,0}_{5,1},W^{3,0}_{5,2},\] \[W^{3,0}_{8,1},W^{3,0}_{8,2},\]  and \[W^{2,2}_{4,1}, W^{2,2}_{4,2}\] form strong Ziegler pairs.
\end{proposition}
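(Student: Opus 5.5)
To prove the statement we must show two things for each of the three pairs: first, that the two arrangements have the same strong combinatorial type in the sense of Definition~\ref{iso}; second, that their Milnor algebras have different minimal graded free resolutions. Equality of the weak combinatorics is already given by Proposition~\ref{prop:strong-same-weak}, so the remaining work is the incidence data and the homological comparison.

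\textbf{Step 1: computing the incidence data.} For each arrangement we take the explicit equations and compute, for every pair of components, the intersection points together with their local intersection multiplicities. A multiplicity-$2$ point is a tacnode; a point common to three components with distinct tangents is an ordinary triple point. We then record the map $\iota$, sending each singular point to the set of components through it, and the vector $\mathbf r$ of how many components carry exactly $j$ singular points.

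For $W^{3,0}_{5,1}$ and $W^{3,0}_{5,2}$, the first two conics $x^2+2y^2-3z^2$ and $x^2+y^2-2z^2$ are shared. They meet in the four points $(\pm1:\pm1:1)$. The third conic passes through $(\pm1:1:1)$ in both cases, which gives the two triple points. Each remaining pair then contributes nodes, since B\'ezout forces $6=4\cdot 3-3\cdot 2$ nodes in total. Both arrangements therefore have incidence of the form: two triple points, each on all three conics, and two nodes on each pair. This gives $\mathbf r=(0,\dots,0,4\text{ at }j=4)$, i.e.\ every conic carries $2+2=4$ singular points.

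For $W^{3,0}_{8,1}$ and $W^{3,0}_{8,2}$ we locate the unique tacnode, note which pair of conics it lies on, and check that the two triple points lie on all three conics. B\'ezout then determines the nodes: the tangent pair has $4-2-2=0$ remaining nodes, and each other pair has $2$. Consequently $\iota$ and $\mathbf r$ agree, provided the tacnodal pair is matched under $\phi_r$.

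For $W^{2,2}_{4,1}$ and $W^{2,2}_{4,2}$ we use the same conics $Q_1,Q_2$ and read off which lines pass through which of the four points of $Q_1\cap Q_2$. In each case one line passes through two of these points, each creating a triple point, or the two lines each pass through one. We then check the resulting $\iota$ and $\mathbf r$ directly.

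\textbf{Step 2: constructing the bijections.} From the Step~1 data we write down explicit bijections $\phi_r$ and $\phi_S$ and verify that $\iota_1(p)=\iota_2(\phi_S(p))$ for every $p$, and that the vectors $\mathbf r$ coincide.

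\textbf{Step 3: the homological comparison.} We compute minimal graded free resolutions of $M(f)$ in \texttt{SINGULAR} for the $W^{3,0}_{5,\ast}$ and $W^{3,0}_{8,\ast}$ arrangements. For the third pair the resolutions are already displayed in the proof of Proposition~\ref{prop:weak-ziegler-pairs}: $W^{2,2}_{4,1}$ is $3$-syzygy with exponents $(3,4,4)$, while $W^{2,2}_{4,2}$ is $4$-syzygy, so the two differ. For the other two pairs it suffices to exhibit one differing invariant, such as ${\rm mdr}$, the number $s$ of generators of ${\rm AR}(f)$, or the Betti numbers. Since $\tau$ is identical within each pair, the difference must show up in $d_1$ or $d_3$, or in $s$.

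\textbf{Expected obstacle.} The main difficulty is Step~1 for the $W^{3,0}_{8,\ast}$ pair. The second arrangement has $\sqrt3$-coefficients, so locating the tacnode and the triple points, and certifying that the multiplicities are exact, requires careful elimination. I would first eliminate $y$, taking resultants of pairs of conics, and factor over $\mathbb Q(\sqrt3)$; repeated roots identify tacnodes and common roots identify triple points. Once $\iota$ is pinned down, matching $\mathbf r$ is immediate.
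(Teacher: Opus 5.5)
Your plan is essentially the paper's proof: a \texttt{SINGULAR} computation showing the resolutions differ within each pair (for both three-conic pairs $\operatorname{mdr}$ is already $3$ versus $4$), plus a direct comparison of the incidence data.

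A few slips should be fixed. None of them affects the conclusion, because both members of each pair are handled identically.
\begin{itemize}
\item The third conic of $W^{3,0}_{5,2}$ passes through $(-1:1:1)$ and $(1:-1:1)$, not through $(\pm1:1:1)$.
\item In $W^{3,0}_{5,\ast}$ each conic carries the two triple points and two nodes with \emph{each} of the other two conics. That is $6$ singular points, so $\mathbf r=(0,0,0,0,0,3,0,0)$. Your value, with an entry $4$, is impossible, since the entries of $\mathbf r$ sum to the number of components, here $3$.
\item For $W^{2,2}_{4,\ast}$ you should settle the configuration rather than leave two alternatives open. In both arrangements one line ($x-y=0$, resp.\ $x+z=0$) contains two points of $Q_1\cap Q_2$, and the other line contains none. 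The alternative in which each line contains one such point would give a different $\mathbf r$.
\end{itemize}

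Finally, for three smooth conics every triple point lies on all three components. B\'ezout then forces the whole incidence pattern from the weak combinatorics. So the elimination over $\mathbb{Q}(\sqrt3)$ that you anticipate as the main obstacle for $W^{3,0}_{8,\ast}$ is unnecessary.
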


\begin{proof}
We first record the homological difference. A \texttt{SINGULAR} computation gives the following minimal graded free resolutions of the corresponding Milnor algebras:
\[
\left(W^{3,0}_{5,1}\right):
\quad
0\longrightarrow S^2(-11)
\longrightarrow S^2(-10)\oplus S(-9)\oplus S(-8)
\longrightarrow S^3(-5)
\longrightarrow S,
\]
\[
\left(W^{3,0}_{5,2}\right):
\quad
0\longrightarrow S(-11)\oplus S(-10)
\longrightarrow S^4(-9)
\longrightarrow S^3(-5)
\longrightarrow S,
\]
\[
\left(W^{3,0}_{8,1}\right):
\quad
0\longrightarrow S(-11)
\longrightarrow S^2(-9)\oplus S(-8)
\longrightarrow S^3(-5)
\longrightarrow S,
\]
\[
\left(W^{3,0}_{8,2}\right):
\quad
0\longrightarrow S^3(-10)
\longrightarrow S^5(-9)
\longrightarrow S^3(-5)
\longrightarrow S,
\]
and
\[
\left(W^{2,2}_{4,1}\right):
\quad
0\longrightarrow S(-11)
\longrightarrow S^2(-9)\oplus S(-8)
\longrightarrow S^3(-5)
\longrightarrow S,
\]
\[
\left(W^{2,2}_{4,2}\right):
\quad
0\longrightarrow S(-11)\oplus S(-10)
\longrightarrow S(-10)\oplus S^2(-9)\oplus S(-8)
\longrightarrow S^3(-5)
\longrightarrow S.
\]
Thus the resolutions are different within each of the three listed pairs.

It remains to justify that the strong combinatorics are the same within each pair. For the pairs
\[
W^{3,0}_{5,1},\quad W^{3,0}_{5,2}
\]
and
\[
W^{3,0}_{8,1},\quad W^{3,0}_{8,2},
\]
this is checked by the incidence description in Remark~5.9. For the pair
\[
W^{2,2}_{4,1},\quad W^{2,2}_{4,2},
\]
the Appendix gives the same weak combinatorics
\[
(2,2;7,0,2)
\]
and the same incidence vector
\[
\mathbf{r}=(0,0,1,0,1,2,0,0).
\]
Moreover, the incidence pattern is the same: in both arrangements the two ordinary triple points are incident with the two conics and with one of the two lines, while the remaining line meets the other components only in nodes. Hence there is a bijection of components and a bijection of singular points preserving degrees, topological types, and incidences.

Therefore all three listed pairs have the same strong combinatorial type but different Jacobian syzygy modules. Hence they form strong Ziegler pairs.
\end{proof}

\begin{remark}
For the pair
\[
W^{3,0}_{5,1},\quad W^{3,0}_{5,2},
\]
let the irreducible conics be indexed by
\[
\mathbf{i}=\{1,2,3\}.
\]
In both arrangements the two ordinary triple points are incident with all three components, while the six nodes are distributed in the same way among the three unordered pairs of components. Therefore there is a bijection of components and a bijection of singular points preserving the topological type of each singular point and the incidence relation. Hence
\[
\mathcal{W}\left(W^{3,0}_{5,1}\right)=\mathcal{W}\left(W^{3,0}_{5,2}\right).
\]
The same verification applies to the pair
\[
W^{3,0}_{8,1},\quad W^{3,0}_{8,2}.\]

For the pair
\[
W^{2,2}_{4,1},\quad W^{2,2}_{4,2},
\]
we index the components as
\[
Q_1,Q_2,L_1,L_2,
\]
where \(Q_1,Q_2\) are the conics. In both arrangements the two ordinary triple points are incident with
\[
Q_1,\ Q_2,\ L_1,
\]
while the line \(L_2\) does not pass through any ordinary triple point. The remaining singularities are seven nodes, distributed in the same way among the pairs of components. Equivalently, both arrangements have the same incidence vector
\[
\mathbf{r}=(0,0,1,0,1,2,0,0).
\]
Thus, after a suitable relabelling of the two lines, the full strong combinatorial data coincide:
\[
\mathcal{W}\left(W^{2,2}_{4,1}\right)=\mathcal{W}\left(W^{2,2}_{4,2}\right).\]
\end{remark}

 \section*{Acknowledgments}
The author would like to thank Piotr Pokora for his guidance and for many valuable remarks and suggestions that improved this paper.

Artur Bromboszcz is supported by the National Science Centre (Poland) Sonata Bis Grant {\bf 2023/50/E/ ST1/00025}. For the purpose of Open Access, the author has applied a CC-BY public copyright license to any Author Accepted Manuscript (AAM) version arising from this submission.

\vskip 0.5 cm

\bigskip
Artur Bromboszcz,
Department of Mathematics,
University of the National Education Commission Krakow,
Podchor\c a\.zych 2,
PL-30-084 Krak\'ow, Poland. \\
\nopagebreak
\textit{E-mail address:} \texttt{artur.bromboszcz@uken.krakow.pl}

\section*{Appendix}
\label{sec:data}

The table below records conic-line arrangements of degree at most \(6\),
together with the combinatorial data and algebraic invariants considered in this article. Here \(W=(k,\ell;n_2,t_3,n_3)\) is a vector of weak combinatorics from Definition~\ref{def:WeakComb}, where \(k\) is the number of conics, \(\ell\) is the number of lines, \(n_2\) is the number of nodes, \(t_3\) is the number of tacnodes, and \(n_3\) is the number of ordinary triple points. The \(\mathbf r\)-vector of incidences encodes the combinatorial data of the arrangement, see Definition~\ref{def:StrongComb}.
The tuple \((d;d_1,\ldots,d_s)\) gives the degree of a defining polynomial of an \(s\)-syzygy curve \(\{\mathcal{C}:{f=0}\}\subset\mathbb P^2_{\mathbb C}\) and the corresponding exponents \(d_i\),  \((i=1,...,s)\).
Moreover, \(\tau(\mathcal C)\) denotes the total Tjurina number of a curve $\mathcal{C}$, see equation \eqref{eq:Trurina} in Preliminaries. The last column gives the type of the curve.
In the column \(\tau(C)\), underlined entries indicate arrangements forming weak Ziegler pair or triple, whereas circled entries indicate arrangements forming strong Ziegler pair.

\begin{landscape}

\begin{table}[h!]
 \centering
 \renewcommand{\arraystretch}{1.5}
 \begin{tabular}{@{}llcclc@{}}
 \toprule 
 \textbf{Combinatorial type} & \textbf{$\mathbf{r}$-vector} & \textbf{Exponents} & \textbf{Tjurina} & \textbf{Defining polynomial $f$} & \multirow{2}{*}{\textbf{Type}} \\ $W=(k,\ell;n_2,t_3,n_3)$ & $(r_1,r_2,\ldots,r_{4(k-1)+2\ell})$ & $(d;d_1,d_2,\ldots,d_s)$ & $\tau{(\mathcal{C})}$& $f\in\mathbb{C}[x,y,z]$ \, \, (ex.) &  \\
 \midrule
 \midrule
 $W^{1,1}_1=(1,1;2,0,0)$ & $(0,2)$ & $(3; 1,2,2)$ & 2 & $(x^2+y^2+z^2)x$ & Nearly free \\

 $W_2^{1,1}=(1,1;0,1,0)$ & $(2,0)$ & $(3;1,1)$ & 3 & $(x^2-y^2+z^2)(x-y)$  & Free \\
 
 \midrule
 $W_1^{2,0}=(2,0;4,0,0)$ & $(0,0,0,2)$ & $(4;2,3,3,3)$ & 4 & $(x^2+4y^2-4z^2)(4x^2+y^2-4z^2)$ & 2B \\

 $W_2^{2,0}=(2,0;2,1,0)$ & $(0,0,2,0)$ & $(4;2,2,3)$ & 5 & $(x^2-y^2-z^2)(x^2-2xz+y^2-3z^2)$ & MPOG \\

 $W_3^{2,0}=(2,0;0,2,0)$ & $(0,2,0,0)$ & $(4;1,3,3)$ & 6 & $(x^2+y^2-z^2)(x^2+y^2+z^2)$ & Nearly free \\
 
 \midrule
 $W_1^{1,2}=(1,2;5,0,0)$ & $(0,0,2,1)$ & $(4; 2,2,3)$ & 5 & $(x^2+y^2-z^2)(x-y)(x+y)$ & MPOG \\
 
 $W_2^{1,2}=(1,2;3,1,0)$ & $(0,0,1,2)$ & $(4;2,2,2)$ & 6 & $(x^2-y^2-z^2)(x-y)x$ & Nearly free \\

 $W_3^{1,2}=(1,2;2,0,1)$ & $(0,2,1,0)$ & $(4; 2,2,2) $ & 6 & $xy(xy+xz+yz)$ & Nearly free \\
 
$W_4^{1,2}=(1,2;1,2,0)$ & $(0,3,0,0)$ & $(4;1,2)$ & 7 & $(x^2-y^2-z^2)(x-y)(x+y)$ & Free \\

\midrule
 $W_1^{2,1}=(2,1;8,0,0)$ & $(0,0,0,1,0,2)$ & $(5; 3,3,4,4)$ & 8 & $(x^2-y^2-z^2)(x^2+y^2-2z^2)x$ & 2B \\
\hdashline

 \multirow{2}{*}{$W_2^{2,1}=(2,1;6,1,0)$} &
$(0,0,1,0,1,1)$  & \multirow{2}{*}{$(5;3,3,3)$} & \multirow{2}{*}{9} & $(x^2-y^2-z^2)(x^2+y^2-2z^2)(x-z)$ & \multirow{2}{*}{2A} \\

 &
$(0,0,0,1,2,0)$ & & & $(x^2-y^2-z^2)(x^2+y^2+2xz-3z^2)x$  & \\
\hdashline

  $W_3^{2,1}=(2,1;5,0,1)$ & $(0,0,1,0,2,0)$ & $(5;3,3,3)$ & 9 & $(x^2+3y^2-4z^2)(3x^2+y^2-4z^2)(2x+y+z)$ & 2A \\

\hdashline

 $W_{4,1}^{2,1}=(2,1;4,2,0)$ & $(0,0,0,3,0,0)$ & $(5;2,3,4)$ & \multirow{3}{*}{\underline{ $\bf 10$ }} & $(x^2-y^2-z^2)(x^2+y^2-z^2)x$  & MPOG \\
 
 \multirow{2}{*}{$W_{4,2}^{2,1}=(2,1;4,2,0)$} & $(0,0,1,1,1,0)$ & \multirow{2}{*}{$(5;3,3,3,3)$} &  & $(x^2+3y^2-3z^2)(x^2+y^2+2yz-3z^2)(x+y+2z)$ & \multirow{2}{*}{2B}  \\

& $(0,1,0,0,2,0)$ & &  & $(x^2+3y^2-3z^2)(3x^2+y^2-3z^2)(x-y+2z)$ &   \\

 \bottomrule
 \end{tabular}
 \caption*{Selected conic-line arrangements of degree at most \(6\) and their combinatorial and algebraic invariants.}
 \end{table}
\end{landscape}

\begin{landscape}
\begin{table}[h]
 \centering
 \renewcommand{\arraystretch}{1.5}
 \begin{tabular}{@{}llcclc@{}}
 \textbf{Combinatorial type} & \textbf{$\mathbf{r}$-vector} & \textbf{Exponents} & \textbf{Tjurina} & \textbf{Defining polynomial $f$} & \multirow{2}{*}{\textbf{Type}} \\ $W=(k,\ell;n_2,t_3,n_3)$ & $(r_1,r_2,\ldots,r_{4(k-1)+2\ell})$ & $(d;d_1,d_2,\ldots,d_s)$ & $\tau{(\mathcal{C})}$& $f\in\mathbb{C}[x,y,z]$ \, \, (ex.) &  \\
 \midrule
 \midrule
  $W_5^{2,1}=(2,1;0,4,0)$ & \multicolumn{5}{l}{\makecell[l]{This case is not geometrically realizable.}}  \\

  $W_6^{2,1}=(2,1;2,0,2)$ & $(0,1,0,2,0,0)$ & $(5;2,3,4)$ & 10 & $(x^2+3y^2-4z^2)(3x^2+y^2-4z^2)(x+y)$ & MPOG \\

 $W_7^{2,1}=(2,1;3,1,1)$ & $(0,0,1,2,0,0)$ & $(5;3,3,3,3)$ & 10 & $(x^2-y^2-z^2)(x^2+y^2-2xz-3z^2)(y+\sqrt{3}z)$ & 2B \\
\hdashline
     \multirow{2}{*}{$W_8^{2,1}=(2,1;2,3,0)$} & $(0,0,2,1,0,0)$ & \multirow{2}{*}{$(5;2,3,3)$} & \multirow{2}{*}{11} & $(x^2+y^2-z^2)(x^2-y^2-z^2)(y-z)$ & \multirow{2}{*}{Nearly free}  \\

      & $(0,1,0,2,0,0)$ &  &  & $(x^2+4y^2-4z^2)(x^2+y^2+2yz-3z^2)(x+2z)$ &  \\

      \hdashline

  $W_9^{2,1}=(2,1;0,1,2)$ & $(0,1,2,0,0,0)$ & $(5;2,3,3)$ & 11 & $(x^2-y^2-z^2)(x^2+y^2-2xz-3z^2)(x-2z)$ & Nearly free \\

    $W_{10}^{2,1}=(2,1;1,2,1)$ & \multicolumn{5}{l}{\makecell[l]{This case is not geometrically realizable by Lemma~\ref{21121}.}}  \\

 \midrule
 $W_1^{1,3}=(1,3;9,0,0)$ & $(0,0,0,3,0,1)$ & $(5;3,3,3)$ & 9 & $(x^2+y^2-3z^2)(x-y)(x+y)z$ & 2A \\
 
  $W_2^{1,3}=(1,3;7,1,0)$ & $(0,0,1,2,1,0)$ & $(5;3,3,3,3)$ & 10 & $(x^2-y^2-z^2)(x-y)(y-z)y$ & 2B \\

\hdashline
    $W_{3,1}^{1,3}=(1,3;6,0,1)$ & $(0,0,2,1,1,0)$ & $(5;3,3,3,3)$ & \multirow{2}{*}{\underline{ $\bf 10$ }} & $(x^2+y^2-z^2)(x+y+z)(x-y+z)z$ & 2B \\
    
    $W_{3,2}^{1,3}=(1,3;6,0,1)$ & $(0,0,3,0,0,1)$ & $(5;2,3,4)$ & & $(x^2+y^2-z^2)(x+y)xy$ & MPOG \\
\hdashline
 
   $W_4^{1,3}=(1,3;5,2,0)$ & $(0,0,2,2,0,0)$ & $(5;2,3,3)$ & 11 & $(x^2+y^2-z^2)(x-z)(y-z)(x+y)$ & Nearly free \\

    $W_5^{1,3}=(1,3;3,0,2)$ & $(0,1,2,1,0,0)$ & $(5;2,3,3)$ & 11 & $(x^2+y^2-2z^2)(x+z)(x-z)(y+z)$ & Nearly free \\

\hdashline
    \multirow{2}{*}{$W_6^{1,3}=(1,3;4,1,1)$} & $(0,1,2,0,1,0)$ & \multirow{2}{*}{$(5;2,3,3)$} & \multirow{2}{*}{11} & $(x^2+y^2- 2z^2)(x+y)(x+y+z)(x+y+2z)$ & \multirow{2}{*}{Nearly free} \\

     & $(0,0,3,1,0,0)$ & & & $(x^2+y^2-2z^2)(x+y)(y-z)(x+y+2z)$ & \\

\hdashline

  $W_7^{1,3}=(1,3;3,3,0)$ & $(0,0,4,0,0,0)$ & $(5;2,2)$ & 12 & $(x^2+3y^2-3z^2)(x-y-2z)(x-y+2z)(y-z)$ & Free \\

      $W_8^{1,3}=(1,3;0,0,3)$ & $(0,3,1,0,0,0)$ & $(5;2,2)$ & 12 & $(x^2+y^2-z^2)(x+y+z)(x-y+z)x$ & Free \\
    
    $W_9^{1,3}=(1,3;2,2,1)$ & $(0,2,1,1,0,0)$ & $(5;2,2)$ & 12 & $(x^2+y^2-z^2)(x-y)(x+z)(y+z)$ & Free \\

  \midrule
  
   $W_1^{3,0}=(3,0;12,0,0)$ & $(0,0,0,0,0,0,0,3)$ & $(6;4,4,5,5,5)$ & 12 & $ (x^2+3y^2-3z^2)(3x^2+y^2-3z^2)(x^2+y^2-2z^2)$ & 3C \\
  
  $W_2^{3,0}=(3,0;10,1,0)$ & $(0,0,0,0,0,0,2,1)$ & $(6;4,4,4,5)$ & 13 &
$(x^2 + y^2 + yz - 2z^2)(x^2 + 3y^2 - 3z^2)(x^2 + y^2 - 2z^2)$ & 3B \\

  $W_3^{3,0}=(3,0;9,0,1)$ & $(0,0,0,0,0,0,3,0)$ & $(6;4,4,4,5)$ & 13 & $(x^2+yz-z^2)(x^2+2y^2-z^2)(x^2+y^2+xz+yz)$ & 3B \\
  
  $W_4^{3,0}=(3,0;8,2,0)$ & $(0,0,0,0,2,1,0,0)$ & $(6;4,4,4,4)$ & 14 & $(x^2+2y^2-xz)(2x^2+y^2-yz)(x^2+y^2-z^2)$ & 3B' \\

 \bottomrule

 \end{tabular}

 \end{table}
 \end{landscape}

 \begin{landscape}
\begin{table}[h]
 \centering
 \renewcommand{\arraystretch}{1.5}
 \begin{tabular}{@{}llcclc@{}}
 \textbf{Combinatorial type} & \textbf{$\mathbf{r}$-vector} & \textbf{Exponents} & \textbf{Tjurina} & \textbf{Defining polynomial $f$} & \multirow{2}{*}{\textbf{Type}} \\ $W=(k,\ell;n_2,t_3,n_3)$ & $(r_1,r_2,\ldots,r_{4(k-1)+2\ell})$ & $(d;d_1,d_2,\ldots,d_s)$ & $\tau{(\mathcal{C})}$& $f\in\mathbb{C}[x,y,z]$ \, \, (ex.) &  \\
 \midrule
 \midrule

          $W_{5,1}^{3,0}=(3,0;6,0,2)$ & \multirow{2}{*}{\textbf{$(0,0,0,0,0,3,0,0)$}} & $(6;3,4,5,5)$ & \multirow{2}{*}{\textbf{{\Circled{ 14 }}}} & $(x^2+2y^2-3z^2)(x^2+y^2-2z^2)(x^2+yz-2z^2)$ & 2B \\

  $W_{5,2}^{3,0}=(3,0;6,0,2)$ & & $(6;4,4,4,4)$ & & $(x^2+2y^2-3z^2)(x^2+y^2-2z^2)(x^2+2xz+2yz-z^2)$ & 3B' \\
\hdashline

    $W_{6}^{3,0}=(3,0;7,1,1)$ & $(0,0,0,0,0,2,1,0)$ & $(6;4,4,4,4)$ & 14 & $(x^2+yz)(x^2+xz+2yz)(x^2+y^2+yz+xz)$ & 3B' \\

    \hdashline
  $W_{7,1}^{3,0}=(3,0;6,3,0)$ & $(0,0,0,0,1,1,1,0)$ & $(6;3,4,4)$ & \multirow{2}{*}{{\underline{ $\bf 15$ }}} & $(4x^2+y^2-4z^2)(x^2-y^2-z^2)(x^2+y^2-2yz)$ & 2A \\

  $W_{7,2}^{3,0}=(3,0;6,3,0)$ & $(0,0,0,0,0,3,0,0)$ & $(6;4,4,4,4,4)$ &  & $(x^2+y^2-2xz)(x^2+y^2+2xz)(x^2-y^2-4z^2)$ & 3C \\

\hdashline
    $W_{8,1}^{3,0}=(3,0;4,1,2)$ & \multirow{2}{*}{$(0,0,0,0,2,1,0,0)$} & $(6;3,4,4)$ & \multirow{2}{*}{\textbf{{\Circled{ 15 }}}} & $(x^2-y^2-z^2)(x^2+y^2+xz-2z^2)(x^2+3y^2-6z^2)$ & 2A \\

    $W_{8,2}^{3,0}=(3,0;4,1,2)$ & & $(6; 4,4,4,4)$ & & $(x^2+3y^2-3z^2)(6x^2+6y^2-2\sqrt{3}xz+6\sqrt{3}yz-15z^2)(x^2-y^2-2z^2)$ & 3B' \\
\hdashline

    $W_{9}^{3,0}=(3,0;5,2,1)$ & $(0,0,0,0,1,2,0,0)$ & $(6;4,4,4,4)$ & 15 & $(x^2+3y^2-4z^2)(x^2+y^2+2xz)(x^2-y^2-xy+xz)$ & 3B' \\

        $W_{10}^{3,0}=(3,0;3,0,3)$ & $(0,0,0,0,3,0,0,0)$ & $(6;4,4,4,4,4)$ & 15 & $(x^2+2y^2-3z^2)(x^2+y^2-2z^2)(2x^2-2y^2+z^2+xy-xz-yz)$ & 3C \\
        
        $W_{11}^{3,0}=(3,0;0,0,4)$ & $(0,0,0,3,0,0,0,0)$ & $(6;2,5,5,5)$ & 16 & $(x^2+2y^2-3z^2)(x^2+y^2-2z^2)(2x^2+y^2-3z^2)$ & 2B \\

    $W_{12}^{3,0}=(3,0;2,2,2)$ & $(0,0,0,1,2,0,0,0)$ & $(6;3,4,4,4)$ & 16 & $(x^2+yz)(x^2+2yz-xz)(x^2+4y^2+xz+4yz)$ & 2B \\

    $W_{13}^{3,0}=(3,0;3,3,1)$ & $(0,0,0,0,3,0,0,0)$ & $(6;3,4,4,4)$ & 16 & \small{$(x^2+3y^2-3z^2)(x^2+y^2-yz-2z^2)$}\footnotesize{$(25x^2-6xy+33y^2+12xz-36yz-60z^2)$} & 2B \\

\hdashline
              $W_{14,1}^{3,0}=(3,0;4,4,0)$ & $(0,0,0,1,0,2,0,0)$ & $(6;3,3,5)$ & \multirow{2}{*}{\underline{ $\bf 16$ }} & $(x^2+y^2-z^2)(4x^2+y^2-4z^2)(x^2+4y^2-4z^2)$ & POG \\

    $W_{14,2}^{3,0}=(3,0;4,4,0)$ & $(0,0,0,0,1,2,0,0)$ & $(6;3,4,4,4)$ &  & $(x^2+y^2-z^2)(x^2+4y^2-4z^2)(x^2-y^2+xz-2z^2)$ & 2B \\
\hdashline

    $W_{15}^{3,0}=(3,0;0,3,2)$ & $(0,0,0,3,0,0,0,0)$ & $(6;3,3,4)$ & 17 & $(x^2+3y^2-4z^2)(x^2+y^2+2xz)(x^2-3y^2-2xz)$ & MPOG \\

       $W_{16}^{3,0}=(3,0;2,5,0)$ & $(0,0,0,1,2,0,0,0)$
   & $(6;3,3,4)$ & 17 & $(x^2+y^2-2xz)(x^2+y^2+2xz)(2x^2+6y^2-9z^2)$ & MPOG \\
  
  $W_{17}^{3,0}=(3,0;0,6,0)$ & $(0,0,0,3,0,0,0,0)$ & $(6;3,3,3)$ & 18 & $(x^2-y^2-z^2)(x^2-y^2+z^2)(x^2+y^2-z^2)$ & Nearly free \\

    $W_{18}^{3,0}=(3,0;1,4,1)$ & \multicolumn{5}{l}{\makecell[l]{This case is not geometrically realizable by Proposition~\ref{prop:oneD4}.}} \\

\midrule
    
      $W_1^{2,2}=(2,2;13,0,0)$ & $(0,0,0,0,2,0,0,2)$ & $(6;4,4,4,5)$ & 13 & $(x^2-y^2-z^2)(x^2+y^2-2z^2)xy$ & 3B \\
      
  \hdashline
  \multirow{2}{*}{$W_2^{2,2}=(2,2;11,1,0)$} & $(0,0,0,1,1,0,1,1)$  & $(6;4,4,4,4)$ & 14 & $x(x+z)(4x^2+y^2-4z^2)(x^2+4y^2-4z^2)$ & \multirow{2}{*}{3B'} \\

    & $(0,0,0,0,2,0,2,0)$ & $(6;4,4,4,4)$ & 14 & $y(x-y)(4x^2+y^2+2yz-3z^2)(x^2+4y^2-4z^2)$  &  \\

    \bottomrule

 \end{tabular}

 \end{table}
 \end{landscape}

  \begin{landscape}
\begin{table}[h]
 \centering
 \renewcommand{\arraystretch}{1.5}
 \begin{tabular}{@{}llcclc@{}}
 \textbf{Combinatorial type} & \textbf{$\mathbf{r}$-vector} & \textbf{Exponents} & \textbf{Tjurina} & \textbf{Defining polynomial $f$} & \multirow{2}{*}{\textbf{Type}} \\ $W=(k,\ell;n_2,t_3,n_3)$ & $(r_1,r_2,\ldots,r_{4(k-1)+2\ell})$ & $(d;d_1,d_2,\ldots,d_s)$ & $\tau{(\mathcal{C})}$& $f\in\mathbb{C}[x,y,z]$ \, \, (ex.) &  \\
 \midrule
\midrule

  \multirow{2}{*}{$W_3^{2,2}=(2,2;10,0,1)$} & $(0,0,0,1,1,0,2,0)$ & \multirow{2}{*}{$(6;4,4,4,4)$} & \multirow{2}{*}{14} & $(x^2+3y^2-4z^2)(3x^2+y^2-4z^2)(x-2y+z)(x+2y)$ & \multirow{2}{*}{3B'} \\

  & $(0,0,2,0,0,0,1,1)$ & & & $(x^2+4y^2-4z^2)(4x^2+y^2-4z^2)(x+2y+z)y$ & \\

  \hdashline

  $W_{4,1}^{2,2}=(2,2;7,0,2)$ & \multirow{2}{*}{\Circled{ $(0,0,1,0,1,2,0,0)$ }} & $(6;3,4,4)$ & \multirow{3}{*}{\Circled{ $\bf 15$ }} & $(x^2+3y^2-4z^2)(3x^2+y^2-4z^2)(x-y+z)(x-y)$ & 2A \\

  $W_{4,2}^{2,2}=(2,2;7,0,2)$ & & $(6;3,4,4,5)$ & & $(x^2+3y^2-4z^2)(3x^2+y^2-4z^2)(x+z)x$ & 2B \\
  \cline{2-2}
\cline{5-5}

  $W_{4,3}^{2,2}=(2,2;7,0,2)$ & $(0,0,1,1,0,2,0,0)$ & $(6;4,4,4,4,4)$ &  & $(x^2+3y^2-4z^2)(3x^2+y^2-4z^2)(x+y+2z)y$ & 3C \\
\hdashline
          \multirow{3}{*}{$W_{5,1}^{2,2}=(2,2;9,2,0)$} & $(0,0,1,0,1,0,2,0)$ & \multirow{3}{*}{$(6;4,4,4,4,4)$} & \multirow{5}{*}{\underline{ $\bf 15$ }} & $(x^2+3y^2-3z^2)(3x^2+y^2-3z^2)(x-y-2z)x$ & \multirow{3}{*}{3C} \\

          & $(0,0,0,2,0,0,2,0)$ & & & $(x^2+3y^2-3z^2)(4x^2+y^2-4z^2)(x-y+2z)(x-z)$ & \\
          
          & $(0,0,0,1,1,1,1,0)$ & & & $(x^2+4y^2-4z^2)(x^2+y^2+2yz-3z^2)(y+z)y$ & \\

  $W_{5,2}^{2,2}=(2,2;9,2,0)$ & $(0,0,0,2,0,1,0,1)$ & $(6;3,4,4)$ & & $(x^2+3y^2-3z^2)(4x^2+y^2-4z^2)(x-y+2z)(y+z)$ & 2A \\

  $W_{5,3}^{2,2}=(2,2;9,2,0)$ & $(0,0,0,0,2,0,2,0)$ & $(6;3,4,4,5)$ & & $(x^2+4y^2-4z^2)(x^2+y^2-z^2)(x+y)y$ & 2B\\

  \hdashline

  \multirow{3}{*}{$W_{6}^{2,2}=(2,2;8,1,1)$} & $(0,0,0,2,0,1,1,0)$ & \multirow{3}{*}{$(6;4,4,4,4,4)$} & \multirow{3}{*}{15} & $(x^2-y^2-z^2)(x^2+y^2-xz-2z^2)(x+y-z)(x-y-z)$ & \multirow{3}{*}{3C} \\

  & $(0,0,0,1,1,2,0,0)$ & & & $(x^2-y^2-xz-2z^2)(x^2+y^2-3xz-4z^2)(y-2z)(x-y)$ & \\
  
  & $(0,0,1,1,0,0,2,0)$ & & & $(x^2+y^2-2z^2)(4x^2+y^2-xz-4z^2)(x+y+2z)(x-2y-4z)$ & \\

   \hdashline
  
  \multirow{4}{*}{$W_7^{2,2}=(2,2;7,3,0)$} & $(0,0,0,1,2,1,0,0)$ & \multirow{4}{*}{$(6;3,4,4,4)$} & \multirow{4}{*}{16} & $(x^2+4y^2-4z^2)(x^2+y^2-z^2)(x-z)z$ & \multirow{4}{*}{2B} \\

    & $(0,0,1,0,1,2,0,0)$ & & & $(x^2+4y^2-4z^2)(x^2+y^2+2yz-3z^2)(x+2z)(x+y)$ & \\

    & $(0,0,0,2,1,0,1,0)$ & & & $(x^2+4y^2-4z^2)(x^2+y^2+yz-2z^2)(x+2z)(x-2z)$ & \\

    & $(0,0,1,1,0,1,1,0)$ & & & $(x^2+3y^2-3z^2)(3x^2+y^2-3z^2)(x+y+2z)(x+z)$ & \\
  
  \hdashline

   \multirow{6}{*}{$W_{8}^{2,2}=(2,2;5,1,2)$} & $(0,0,2,0,0,2,0,0)$ & \multirow{6}{*}{$(6;3,4,4,4)$} & \multirow{6}{*}{16} & $(x^2+y^2-2z^2)(4x^2+y^2-xz-4z^2)(x+y+2z)(3x-y-2z)$ & \multirow{6}{*}{2B} \\
   \cline{2-2}
\cline{5-5}

   & \multirow{2}{*}{{$\bf (0,0,2,0,1,1,0,0)$}} & & & {$(x^2+y^2-2z^2)(4x^2+y^2-xz-4z^2)(x+y+2z)(x-z)$} & \\

   & & & & {$(x^2+2y^2+6yz)(2x^2+y^2+6yz)(x+2y+6z)(x+y+2z)$} & \\
   \cline{2-2}
\cline{5-5}

   & $(0,0,1,0,3,0,0,0)$ & & & $(x^2+3y^2-12z^2)(x^2+3yz-6z^2)(y+z)y$ & \\

   & $(0,0,0,2,2,0,0,0)$ & & & $(x^2+3y^2-12z^2)(x^2+3yz-6z^2)(x+2y+5z)(x-2y-5z)$ & \\

   & $(0,0,1,1,1,1,0,0)$ & & & $(x^2+2y^2+6yz)(2x^2+y^2+6yz)(x+2y+6z)(x-y-3z)$ & \\

          \bottomrule

 \end{tabular}

 \end{table}
 \end{landscape}

\begin{landscape}
\begin{table}[h]
\centering
\renewcommand{\arraystretch}{1.5}
 \begin{tabular}{@{}llcclc@{}}
 \textbf{Combinatorial type} & \textbf{$\mathbf{r}$-vector} & \textbf{Exponents} & \textbf{Tjurina} & \textbf{Defining polynomial $f$} & \multirow{2}{*}{\textbf{Type}} \\ $W=(k,\ell;n_2,t_3,n_3)$ & $(r_1,r_2,\ldots,r_{4(k-1)+2\ell})$ & $(d;d_1,d_2,\ldots,d_s)$ & $\tau{(\mathcal{C})}$& $f\in\mathbb{C}[x,y,z]$ \, \, (ex.) &  \\
 \midrule
 \midrule

  \multirow{2}{*}{$W_{9}^{2,2}=(2,2;4,0,3)$} & $(0,0,1,1,2,0,0,0)$ & \multirow{2}{*}{$(6;3,4,4,4)$} & \multirow{2}{*}{16} &$(x^2+3y^2-4z^2)(3x^2+y^2-4z^2)(x-y)(x+2y-z)$ & \multirow{2}{*}{2B} \\

  & $(0,0,2,0,1,1,0,0)$ & & & $(x^2+3y^2-4z^2)(3x^2+y^2-4z^2)(x-y-2z)(x+y+2z)$ & \\
  
   \hdashline
   \multirow{4}{*}{$W_{10}^{2,2}=(2,2;6,2,1)$} & \multirow{2}{*}{$(0,0,0,2,1,1,0,0)$} & \multirow{4}{*}{$(6;3,4,4,4)$} & \multirow{4}{*}{16} & $(x^2+y^2-z^2)(x^2+2y^2-2z^2)(x+2y+z)y$ & \multirow{4}{*}{2B} \\

   & & & & $(x^2+3y^2-12z^2)(x^2+3yz-6z^2)(y+2z)(x+3y)$ & \\
   \cline{2-2}
\cline{5-5}

   & $(0,0,1,1,0,2,0,0)$ & & & $(x^2+3y^2-12z^2)(x^2+3yz-6z^2)(y+2z)\left(x+y+(2-2\sqrt{3})z\right)$ & \\

   & $(0,0,2,0,0,1,1,0)$ & & & $(4x^2+y^2-4z^2)(x^2+3y^2-3z^2)(x+y+2z)(x-y-2z)$ & \\

   \hdashline
  
      \multirow{3}{*}{$W_{11}^{2,2}=(2,2;5,4,0)$} & $(0,0,0,3,0,1,0,0)$ & \multirow{3}{*}{$(6;3,3,4)$} & \multirow{3}{*}{17} & $(x^2+4y^2-4z^2)(x^2+y^2-z^2)(x-z)(x+z)$ & \multirow{3}{*}{MPOG} \\

 & $(0,0,2,0,0,2,0,0)$ & & & $(x^2+3y^2-3z^2)(3x^2+y^2-3z^2)(x+y+2z)(x+y-2z)$ & \\
 
 & $(0,0,1,1,1,1,0,0)$ & & & $(x^2+4y^2-4z^2)(x^2+y^2+2yz-3z^2)(x+2z)(y+3z)$ & \\
      
      \hdashline

        $W_{12}^{2,2}=(2,2;1,0,4)$ & $(0,0,2,2,0,0,0,0)$ & $(6;2,4,5)$ & 17 & $(x^2-y^2-z^2)(x^2-y^2-yz)(y-z)z$ & MPOG \\
          $W_{13}^{2,2}=(2,2;2,1,3)$ & $(0,0,2,2,0,0,0,0)$ & $(6;3,3,4)$ & 17 & $(x^2+y^2-4xz-21z^2)(x^2-y^2-9z^2)(4x-3y-8z)(y+4z)$ & MPOG \\

\hdashline
  \multirow{3}{*}{$W_{14}^{2,2}=(2,2;3,2,2)$} & \multirow{2}{*}{$(0,0,2,0,2,0,0,0)$} & \multirow{3}{*}{$(6;3,3,4)$} & \multirow{3}{*}{17} & $(x^2+3y^2-3z^2)(3x^2+y^2-3z^2)(x+y+2z)(x-y)$ & \multirow{3}{*}{MPOG} \\

& & & & $(x^2+3y^2-4z^2)(2x^2-3y^2+3xz-2z^2)(x+3y+4z)(3x+y-4z)$ & \\
\cline{2-2}
\cline{5-5}

& $(0,0,1,2,1,0,0,0)$ & & & $(x^2+3y^2-4z^2)(2x^2-3y^2+3xz-2z^2)(x+3y+4z)(x-z)$ & \\
  
  \hdashline

    \multirow{2}{*}{$W_{15}^{2,2}=(2,2;4,3,1)$} &$(0,0,2,0,1,1,0,0)$ & \multirow{2}{*}{$(6;3,3,4)$} & \multirow{2}{*}{17} & $(2x^2+y^2+yz-2z^2)(x^2+3y^2-3z^2)(x+y+2z)(x-y-2z)$ & \multirow{2}{*}{MPOG}\\

& $(0,0,1,1,2,0,0,0)$ & & & $(x^2+y^2-z^2)(x^2-3y^2-z^2)(y-z)(x-2y)$ & \\

    \hdashline

  $W_{16}^{2,2}=(2,2;3,5,0)$ & $(0,0,2,0,2,0,0,0)$ & $(6;3,3,3)$ & 18 & $(x^2+4y^2-4z^2)(x^2+y^2+2yz-3z^2)(x-2z)(x+2z)$ & Nearly free\\
  \hdashline

  $W_{17}^{2,2}=(2,2;0,2,3)$ & \multicolumn{5}{l}{\makecell[l]{This case is not geometrically realizable by the B\'ezout count used in Lemma~\ref{21121}.}} \\

  $W_{18}^{2,2}=(2,2;1,3,2)$ & $(0,0,2,2,0,0,0,0)$ & $(6;3,3,3)$ & 18 & $(x^2+4y^2-4z^2)(x^2+y^2+2yz-3z^2)(3y+z)(x+2z)$ & Nearly free \\

  $W_{19}^{2,2}=(2,2;2,4,1)$ & \((0,0,2,1,1,0,0,0)\) & \((6;3,3,3)\) & \(18\) & \((y^2-xz)(x^2-10xy+61xz-24y^2-60yz+36z^2)(x-2y+z)x\) & Nearly free \\

    $W_{20}^{2,2}=(2,2;1,6,0)$ & \multicolumn{5}{l}{\makecell[l]{This case is not geometrically realizable.}} \\

  $W_{21}^{2,2}=(2,2;0,5,1)$ & \multicolumn{5}{l}{\makecell[l]{This case is not geometrically realizable by the B\'ezout count used in Lemma~\ref{21121}.}} \\

 \bottomrule

 \end{tabular}

 \end{table}
 \end{landscape}

  \begin{landscape}
\begin{table}[h]
 \centering
 \renewcommand{\arraystretch}{1.5}
 \begin{tabular}{@{}llcclc@{}}
 \textbf{Combinatorial type} & \textbf{$\mathbf{r}$-vector} & \textbf{Exponents} & \textbf{Tjurina} & \textbf{Defining polynomial $f$} & \multirow{2}{*}{\textbf{Type}} \\ $W=(k,\ell;n_2,t_3,n_3)$ & $(r_1,r_2,\ldots,r_{4(k-1)+2\ell})$ & $(d;d_1,d_2,\ldots,d_s)$ & $\tau{(\mathcal{C})}$& $f\in\mathbb{C}[x,y,z]$ \, \, (ex.) &  \\
 \midrule
 \midrule

     $W_{1}^{1,4}=(1,4;14,0,0)$ & $(0,0,0,0,4,0,0,1)$ & $(6;4,4,4,4)$ & 14 & $(x^2+y^2-z^2)(x-y)(x+y)(2x-z)(2x+z)$ & 3B' \\

    $W_{2}^{1,4}=(1,4;12,1,0)$ & $(0,0,0,1,3,0,1,0)$ & $(6;4,4,4,4,4)$ & 15 & $(x^2+y^2-z^2)(y-z)(x+y)(2x+z)y$ & 3C \\

\hdashline
            $W_{3,1}^{1,4}=(1,4;11,0,1)$ & $(0,0,0,3,1,0,0,1)$ & $(6;3,4,4)$ & \multirow{2}{*}{\underline{ $\bf 15$ }} & $(x^2+y^2-z^2)(x+y)xyz$ & 2A \\

    $W_{3,2}^{1,4}=(1,4;11,0,1)$ & $(0,0,0,2,2,0,1,0)$ & $(6;4,4,4,4,4)$ & & $(x^2+y^2-z^2)(x+y-z)(x-y+z)(x-y)(x+y)$ & 3C \\
\hdashline

    $W_{4}^{1,4}=(1,4;10,2,0)$ & $(0,0,0,2,2,1,0,0)$ & $(6;3,4,4,4)$ & 16 & $(x^2+y^2-z^2)(y-z)(2x+y)(x-y)(x+z)$ & 2B \\

    \hdashline

     \multirow{2}{*}{$W_{5}^{1,4}=(1,4;9,1,1)$} & $(0,0,1,2,1,0,1,0)$ & \multirow{2}{*}{$(6;3,4,4,4)$} &  \multirow{2}{*}{16} & $(x^2+y^2-z^2)(x+z)(2x-z)(x-y)x$ &  \multirow{2}{*}{2B} \\

     & $(0,0,0,3,1,1,0,0)$ & & & $(x^2+y^2-z^2)(x+z)(2x+y+z)(x-y)x$ & \\

     \hdashline

    \multirow{2}{*}{$W_{6}^{1,4}=(1,4;8,0,2)$} & $(0,0,1,3,0,0,1,0)$  & \multirow{2}{*}{$(6;3,4,4,4)$} & \multirow{2}{*}{16} & $(x^2+y^2-z^2)x(x-y)(x+y)(2x+y+z)$ & \multirow{2}{*}{2B} \\

& $(0,0,1,2,1,1,0,0)$ & & & $(x^2+y^2-2z^2)(x-z)(x+z)(x-y)y$ & \\
    
    \hdashline

        \multirow{3}{*}{$W_{7}^{1,4}=(1,4;5,0,3)$} & $(0,0,2,2,1,0,0,0)$ &  \multirow{3}{*}{$(6;3,3,4)$} &  \multirow{3}{*}{17} & $(x^2+y^2-z^2)(x+y-z)(x-y+z)(x+2y+z)(x-2y-z)$ &  \multirow{3}{*}{MPOG} \\

        & $(0,0,3,0,2,0,0,0)$ & & & $(x^2+y^2-z^2)(x+y-z)(x-y+z)(2y-z)y$ & \\
        
        & $(0,0,3,1,0,1,0,0)$ & & & $(x^2+y^2-2z^2)(x-z)(x+z)(y+z)x$ & \\
        
        \hdashline

            $W_{8}^{1,4}=(1,4;8,3,0)$ & $(0,0,0,3,2,0,0,0)$ & $(6;3,3,4)$ & 17 & $(x^2+y^2-z^2)(y+z)(y-z)(x+z)(x-2y)$ & MPOG \\
            \hdashline

    \multirow{2}{*}{$W_{9}^{1,4}=(1,4;6,1,2)$} & $(0,0,2,2,0,1,0,0)$ &  \multirow{2}{*}{$(6;3,3,4)$} &  \multirow{2}{*}{17} & $(x^2+y^2-2z^2)(x-y+2z)(3x+y-2z)(y+z)x$ &  \multirow{2}{*}{MPOG} \\

    & $(0,0,1,3,1,0,0,0)$ & & & $(x^2+y^2-25z^2)(x+5z)(y-4z)(y+4z)(x-3z)$ & \\
    
    \hdashline

    \multirow{3}{*}{$W_{10}^{1,4}=(1,4;7,2,1)$} & $(0,0,2,1,1,1,0,0)$ &  \multirow{3}{*}{$(6;3,3,4)$} &  \multirow{3}{*}{17} & $(x^2+y^2-z^2)(x-z)(x+z)(2y-z)z$ &  \multirow{3}{*}{MPOG} \\

    & $(0,0,1,3,0,1,0,0)$ & & & $(x^2+y^2-z^2)(x+z)(y-z)(2x-z)z$ & \\
    & $(0,0,0,4,1,0,0,0)$ & & & $(x^2+y^2-z^2)(x+y-z)(x-y+z)(x+y-\sqrt{2}z)(x-y+\sqrt{2}z)$ & \\

    \hdashline

    $W_{11}^{1,4}=(1,4;6,4,0)$ & $(0,0,0,5,0,0,0,0)$ & $(6;3,3,3)$ & 18 & $(x^2+y^2-z^2)(x-z)(x+z)(y-z)(y+z)$ & Nearly free \\

    $W_{12}^{1,4}=(1,4;2,0,4)$ & $(0,0,4,1,0,0,0,0)$ & $(6;2,4,4)$ & 18 & $(x^2+y^2-2z^2)(x-z)(x+z)(y-z)(y+z)$ & Nearly free\\

    $W_{13}^{1,4}=(1,4;5,3,1)$ & $(0,0,2,2,1,0,0,0)$ & $(6;3,3,3)$ & 18 & $(x^2+y^2-2z^2)(x-y+2z)(x+y-2z)(x+y+2z)x$ & Nearly free \\

    $W_{14}^{1,4}=(1,4;4,2,2)$ & $(0,0,3,1,1,0,0,0)$ & $(6;3,3,3)$ & 18 & $(x^2+y^2-z^2)(x-z)(x+z)(2x-y-z)x$ & Nearly free \\

    $W_{15}^{1,4}=(1,4;2,3,2)$ & \multicolumn{5}{l}{\makecell[l]{This case is not geometrically realizable.}}  \\

  \bottomrule

 \end{tabular}
 \end{table}
 \end{landscape}


\begin{thebibliography}{000}

\bibitem{NewH} T. Abe, A. Dimca, P. Pokora, A new hierarchy for complex plane curves. {\it Canad. Math. Bull.} Advance Publication 1--24 (2025), \url{https://doi.org/10.4153/S0008439525101422}.

\bibitem{Arnold} V. I. Arnold, Local normal forms of functions. \textit{Invent. Math}. {\bf 35}: 87--109 (1976).

\bibitem{BJP} A. Bromboszcz, B. Jarosławski, P. Pokora, On plus-one generated arrangements of plane conics. {\it Geometriae Dedicata} {\bf 220}: Art. Id. 10 (2026).

\bibitem{Singular} W.~Decker, G.-M. Greuel, G.~Pfister, H.~Sch\"onemann, Singular {4-1-1} --- {A} computer algebra system for polynomial computations, \url{http://www.singular.uni-kl.de} (2018).

\bibitem{Sernesi}
A. Dimca, E. Sernesi, Syzygies and logarithmic vector fields along plane curves. (Syzygies et champs de vecteurs logarithmiques le long de courbes planes.) \textit{J. \'Ec. Polytech., Math.} \textbf{1}: 247--267 (2014).

\bibitem{typ3} A. Dimca, G. Sticlaru, On type three complex plane curves. \texttt{arXiv:2601.01824}, to appear in \textit{Bull. Math. Soc. Sci. Math. Roumanie}.

\bibitem{DS20} 
A. Dimca, G. Sticlaru, Plane curves with three syzygies, minimal Tjurina curves, and nearly cuspidal curves. \textit{Geom. Dedicata}  {\bf 207}:  29--49 (2020).

\bibitem{Cremona} S. H. Hassanzadeh, A. Simis, Plane Cremona maps: Saturation and regularity of the base ideal. \textit{J. Algebra} \textbf{371}: 620--652 (2012).

\bibitem{BJ}
B. Jarosławski, On a Poincaré-type polynomial for plus-one generated plane curves. \textit{Ann. Univ. Ferrara} \textbf{72}: Art. Id 15 (2026).

\bibitem{Megyesi} G. Megyesi, Configurations of conics with many tacnodes. {\it Tohoku Math. J.} {\bf 52(4)}: 555--577 (2000).

\bibitem{MP24} A. M\u{a}cinic and P. Pokora, On plus–one generated conic-line arrangements with simple singularities. {\it Atti Accad. Naz. Lincei, Cl. Sci. Fis. Mat. Nat., IX. Ser., Rend. Lincei, Mat. Appl.} {\bf 35(3)}: 349--364 (2024).

\bibitem{PokoraPoly} P. Pokora, On Poincar\'e polynomials for plane curves with quasi-homogeneous singularities. { \it Bull. Lond. Math. Soc.} {\bf 57(8)}: 2549--2560 (2025).

\bibitem{Qarr} P. Pokora, $\mathcal{Q}$--conic arrangements in the complex projective plane. {\it Proc. Amer. Math. Soc.}, {\bf 151(7)}: 2873--2880 (2023).

\end{thebibliography}
\end{document}